\theoremstyle{plain}
\newtheorem{thm}{Theorem}[section]
\newtheorem{lem}[thm]{Lemma}
\newtheorem{prop}[thm]{Proposition}
\newtheorem{cor}[thm]{Corollary}
\newtheorem{rmk}[thm]{Remark}
\newtheorem{ques}[thm]{Question}
\newcommand{\Cl}{\operatorname{Cliff}(C)}
\newcommand{\Cln}{\operatorname{Cliff}_n(C)}
\newcommand{\Clt}{\operatorname{Cliff}_2(C)}
\newcommand{\Clth}{\operatorname{Cliff}_3(C)}
\newcommand{\Clo}{\operatorname{Cliff}_1(C)}
\newcommand{\Clnn}{\operatorname{Cliff}_{n+1}(C)}
\newcommand{\Clr}{\operatorname{Cliff}_r(C)}
\title{On an example of Mukai}
\author{H. Lange, V. Mercat and P. E. Newstead}
\address{H. Lange\\Department Mathematik\\
              Universit\"at Erlangen-N\"urnberg\\
              Bismarckstra\ss e $1\frac{ 1}{2}$\\
              D-$91054$ Erlangen\\
              Germany}
              \email{lange@mi.uni-erlangen.de}
\address{V. Mercat\\5 rue Delouvain\\75019 Paris\\France}
              \email{vipani.mm@gmail.com}
\address{P.E. Newstead\\Department of Mathematical Sciences\\
              University of Liverpool\\
              Peach Street, Liverpool L69 7ZL, UK}
\email{newstead@liv.ac.uk}
\thanks{All authors are members of the research group VBAC (Vector Bundles on Algebraic Curves).}
\keywords{Semistable vector bundle, Clifford index, gonality, Brill-Noether locus}
\subjclass[2000]{Primary: 14H60}
\begin{document}
\begin{abstract}
In this note we use an example of Mukai to construct semistable bundles of rank 3 with 6 independent sections on a general curve of genus 9 or 11 with Clifford index strictly less than the Clifford index of the curve. The example also allows us to show the non-emptiness of some Brill-Noether loci with negative expected dimension.
\end{abstract}

\maketitle
\section{Introduction}\label{intro}
In \cite[Proposition 2]{mu1}, Mukai stated that, if $C$ is a non-pentagonal curve of genus $9$, then there exists a unique stable bundle $E$ of rank 3 and determinant $K_C$ with $h^0(E)=6$ (actually Mukai said ``quasi-stable'', which is what is now usually called ``polystable'', but since $\deg E=16$ is coprime to $3$, this implies that $E$ is stable). Computing the Clifford index $\gamma(E)$ as defined in \cite{cl} gives
$$\gamma(E)=\frac13(16-6)=\frac{10}3.$$
Since $C$ has Clifford index $\Cl=4$, this contradicts the conjecture of \cite{m} (see also \cite[Conjecture 9.3]{cl}). It shows further that the Brill-Noether locus $B(3,16,6)$, which has ``expected dimension'' $-11$, is non-empty. It also sheds light on the main result of \cite{cl1}, which implies that any semistable bundle of rank $3$ with $h^0=6$ on $C$ has degree $\ge15$; in fact such a bundle of degree $15$ cannot exist (see Proposition \ref{comm1} and Comment 1), so Mukai's bundle has the minimum possible degree for a semistable bundle of rank $3$ with $h^0=6$.

In fact, \cite{mu1} contains no proofs. The above result is proved in \cite{mu2}, except that the stability of $E$ is only indicated in a remark \cite[Remark 5.7(2)]{mu2} (the full proof may be found in the addendum \cite{mu3}). In this note, we give a complete proof, show that a similar result holds for genus $11$ and consider possible generalisations and extensions; in the main theorem (Theorem \ref{main}), we give general conditions under which the Clifford index $\Cln$ defined in \cite{cl} is strictly smaller than $\Cl$ either for $n=2$ or for $n=3$. The methods are for the most part those of Mukai. 

In a postscript, we comment on developments since this paper was completed.

We would like to thank the referee for some helpful comments.

\section{Background and preliminaries}\label{back}
Let $C$ be a smooth projective curve of genus $g \geq 4$ defined over an algebraically closed field of characteristic zero. We denote by $K_C$ the canonical line bundle on $C$.
In  \cite{cl}, the classical Clifford index $\Cl$ of $C$ was generalised to semistable bundles in two different ways, only one of which is needed in this paper. First we define, for any vector bundle $E$ of rank $n$ and degree $d$, 
$$
\gamma(E) := \frac{1}{n} \left(d - 2(h^0(E) -n)\right) = \mu(E) -2\frac{h^0(E)}{n} + 2,
$$
where $\mu(E)=\frac{d}n$.
Then 
the Clifford index $\Cln$ is defined by 
$$
\Cln := \min_{E} \left\{ \gamma(E) \;\left| 
\begin{array}{c} E \;\mbox{semistable of rank}\; n \\
h^0(E) \geq 2n,\; \mu(E) \leq g-1
\end{array} \right\} \right.
$$
(this invariant is denoted by $\gamma_n'$ in \cite{cl, ln, cl1, ln2, ln3}). We say that a bundle $E$ {\em contributes} 
to $\Cln$ if it is semistable of rank $n$ with $\mu(E) \leq g-1$ and $h^0(E)\ge 2n$ and that $E$ 
{\em computes} $\Cln$ if in addition $\gamma(E)=\Cln$. Note that $\Clo=\Cl$.

A conjecture was made in \cite{m} concerning the maximum value of $h^0(E)$ for $E$ a semistable bundle of any given rank and degree; the most important part of this conjecture can be stated as \\

\noindent{\bf Conjecture.} \cite[Conjecture 9.3]{cl} $\Cln=\Cl$.\\

The main purpose of this note is to give examples to show that this conjecture can fail.
For this purpose we use a bundle of rank $3$ on a  curve of genus $9$ constructed by Mukai (see \cite{mu1, mu2, mu3}). We show that the construction works also for genus $11$.

In the remainder of this section, we introduce some notation which we shall need. First we recall the {\em gonality sequence} 
$d_1,d_2,\ldots,d_r,\ldots$ of $C$ defined by 
$$
d_r := \min \{ \deg L \;|\; L \; \mbox{a line bundle on} \; C \; \mbox{with} \; h^0(L) \geq r +1\}.
$$
Note that a line bundle $L$ of degree $d_r$ with $h^0(L)\ge r+1$ in fact has $h^0(L)=r+1$ and is  generated by its sections, so we have an exact evaluation sequence
\begin{equation}\label{eili}
0\to E^*\to H^0(L)\otimes{\mathcal O}_C\to L\to0.
\end{equation}
The  bundle $E$ is often called the {\em dual span} of $L$.
Note that $\Cl$ is the minimum value of $d_r-2r$ taken over all $r$ for which $d_r\le g-1$. 
We shall say that $d_r$ {\em computes} $\Cl$ if $d_r\le g-1$ and $d_r-2r=\Cl$. The numbers $d_r$ satisfy the inequalities
\begin{equation}\label{dr}
d_r\le g+r-\left[\frac{g}{r+1}\right]
\end{equation}
with equality if $C$ is a {\em Petri curve}, that is a curve for which the multiplication map
$$H^0(L)\otimes H^0(L^*\otimes K_C)\longrightarrow H^0(K_C)$$
is injective for every line bundle $L$ on $C$. It is important to note that the general curve of any given genus is a Petri curve (see, for example, \cite{Laz}).

We need also to recall the definitions of the higher rank Brill-Noether loci (see \cite{gt} for a survey of the theory and \cite{bgn} for the notations we use here). Let $M(n,d)$ denote the moduli space of stable bundles of rank $n$ and degree $d$ on $C$. For any positive integer $k$, the {\em Brill-Noether locus} $B(n,d,k)$ is defined by
$$B(n,d,k)=\{E\in M(n,d)|h^0(E)\ge k\}.$$
For every irreducible component $Z$ of $B(n,d,k)$, we have
$$\dim Z\ge\beta(n,d,k):=n^2(g-1)-k(k-d+n(g-1)).$$
The number $\beta(n,d,k)$ is often called the {\em expected dimension} of $B(n,d,k)$.

Throughout the paper, $C$ will denote a smooth projective curve of genus $g$ and Clifford 
index $\Cl\ge3$ (hence also $g\ge7$ by \eqref{dr}) defined 
over an algebraically closed field of characteristic zero. For a vector bundle $G$ on $C$, the rank and 
degree of $G$ will be denoted by $r_G$ and $d_G$ respectively. 

The following lemma of Paranjape-Ramanan will be used on several occasions.
 \begin{lem}\label{lempr}
Let $E$ be a vector bundle on $C$ of rank $n\ge2$ with $h^0(E)=n+s$ ($s\ge1$) such that $E$ possesses 
no proper subbundle $F$ with $h^0(F)>r_F$. Then $h^0(\det E)\ge ns+1$ and so $d_E\ge  d_{ns}$.\end{lem}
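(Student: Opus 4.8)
The second assertion is immediate from the first: a line bundle $\det E$ with $h^0(\det E)\ge ns+1$ has $\deg(\det E)\ge d_{ns}$ by the definition of the gonality sequence, and $\deg(\det E)=d_E$. So the whole content is the inequality $h^0(\det E)\ge ns+1$, and I would prove it by exhibiting $ns+1$ linearly independent sections of $\det E$ coming from wedge products of the given sections.

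First I would reinterpret the hypothesis. If $W\subseteq H^0(E)$ has $\dim W=k\le n$, let $F$ be the subbundle of $E$ obtained by saturating the subsheaf generated by $W$; then $W\subseteq H^0(F)$, so the only way to have $r_F<k$ is for $F$ to be a proper subbundle with $h^0(F)\ge k>r_F$, contradicting the hypothesis. Hence any $k\le n$ linearly independent sections generate a subsheaf of generic rank exactly $k$; in particular they are linearly independent at the general point of $C$. Choosing a basis $s_1,\dots,s_{n+s}$ of $H^0(E)$, the section $\omega_0:=s_1\wedge\cdots\wedge s_n$ of $\det E$ is therefore nonzero, and $s_1,\dots,s_n$ trivialise $E$ at the general point of $C$.

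The plan is then to produce the $ns+1$ candidate sections
\[
\omega_0,\qquad \omega_{ij}:=s_1\wedge\cdots\wedge s_{i-1}\wedge s_j\wedge s_{i+1}\wedge\cdots\wedge s_n\quad(1\le i\le n,\ n<j\le n+s),
\]
and to prove that they are linearly independent. Working over the function field and writing $s_j=\sum_{i=1}^n (x_j)_i\,s_i$ in the generic frame $(s_1,\dots,s_n)$, Cramer's rule gives $\omega_{ij}=(x_j)_i\,\omega_0$, so (since $\omega_0\ne0$) a linear relation among the candidates is exactly a relation $a_0+\sum_{i,j}a_{ij}(x_j)_i=0$ among the rational functions $(x_j)_i=s_i^*(s_j)$, where $s_1^*,\dots,s_n^*$ is the coframe dual to $s_1,\dots,s_n$. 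Thus everything reduces to showing that the $ns$ functions $(x_j)_i$, together with the constant $1$, are linearly independent over the ground field.

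This independence is the heart of the matter and the step I expect to be hardest. I would prove it by contradiction, turning a relation into a forbidden subbundle. In the basic case of a single extra section $s_j$, a relation $\sum_i b_i (x_j)_i=c$ (with some $b_i\ne0$) reads $w(s_j)=c$ for $w:=\sum_i b_i s_i^*\ne0$, while $w(s_k)=b_k$ for $k\le n$; hence the rational section $w$ of $E^*$ pairs all of $s_1,\dots,s_n,s_j$ to constants, and the $n$-dimensional space of their combinations annihilated by $w$ consists of sections of the rank-$(n-1)$ subbundle $F:=\overline{\ker w}$, giving $h^0(F)\ge n>r_F$, a contradiction. The general relation I would handle by induction on $s$: replacing $E$ by the rank-$n$ subbundle generated by $s_1,\dots,s_{n+s-1}$ (which again satisfies the hypothesis and carries the required sections) produces $n(s-1)+1$ of the sections by induction, while the remaining $n$ sections $\omega_{i,n+s}$ are shown to be independent from these by the same covector argument. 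The main obstacle is precisely controlling these cross-terms, that is, ensuring that the coordinate functions of the successive extra sections stay independent modulo the sections already produced; this is where the full strength of the no-subbundle hypothesis is used. Equivalently, one may phrase the whole argument as the linear nondegeneracy of the classifying map $C\to\mathrm{Gr}(n,n+s)$ defined by the sections, the Plücker coordinates being the minors above.
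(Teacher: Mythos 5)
Your reduction of the lemma to the linear independence, over the ground field, of the constant $1$ and the rational functions $(x_j)_i$, together with your covector argument in the case $s=1$, is correct: a nonzero rational covector $w=\sum_i b_i s_i^*$ pairing each of $s_1,\dots,s_n,s_j$ into a constant gives a constant-valued functional on their $(n+1)$-dimensional span, whose kernel supplies $n$ independent sections lying in the rank-$(n-1)$ subbundle $\overline{\ker w}$, contradicting the hypothesis. For context: the paper itself offers no argument at all here; its ``proof'' is the single sentence that the lemma is a restatement of \cite[Lemma 3.9]{pr}, so what you are attempting is precisely the content of that cited result.

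The genuine gap is the inductive step for $s\ge 2$, i.e.\ exactly the step you yourself label ``the main obstacle''. A relation among $\omega_0$ and the $\omega_{ij}$ now reads $a_0+\sum_{n<j\le n+s}w_j(s_j)=0$, where the $w_j=\sum_i a_{ij}s_i^*$ are in general \emph{different} covectors for different $j$. Your assertion that the new sections $\omega_{i,n+s}$ are ``shown to be independent from these by the same covector argument'' does not hold as stated: the single covector $w:=w_{n+s}$ pairs $s_{n+s}$ not into a constant but into the rational function $-a_0-\sum_{n<j<n+s}w_j(s_j)$, so $w$ is not constant-valued on the span of $s_1,\dots,s_n,s_{n+s}$ and no forbidden subbundle is produced. (The argument does go through when all the $w_j$ are proportional to one covector $w$, since then $w$ pairs the single section $\sum_j c_j s_j$ into a constant and one concludes as for $s=1$; the non-proportional case is untouched, and that is where the real content of Paranjape--Ramanan's lemma lies. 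Note the paper applies the lemma with $s=2$ and $s=3$, so the $s=1$ case alone does not suffice.) Rephrasing the problem as linear nondegeneracy of the classifying map $C\to\mathrm{Gr}(n,n+s)$ does not resolve it either: the needed fact, that the hypothesis forces the Pl\"ucker image to span at least $ns$ projective dimensions, is exactly the same unproved statement. There is also a smaller formulation problem in your induction: the ``rank-$n$ subbundle generated by $s_1,\dots,s_{n+s-1}$'' is either a non-saturated subsheaf or, after saturation, all of $E$; this is repairable by inducting on a subspace $V\subseteq H^0(E)$ with $E$ fixed, but the cross-term independence remains unproved, so the proof is incomplete at precisely its hardest point.
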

\begin{proof}
This is a restatement of \cite[Lemma 3.9]{pr}.
\end{proof}

\section{The main theorem}\label{mainsec}
Let $C$ be a smooth curve with $\Cl\ge3$  and $n$ an integer, $n\ge3$. We begin by taking 2 line bundles $L_1$, $L_2$ on $C$ of degree $d_{n-1}$ with $h^0(L_i)=n$. Let $E_i$ denote the dual span of $L_i$ (see \eqref{eili}), so we have exact sequences
\begin{equation}\label{extra}
0\to E_i^*\to H^0(L_i)\otimes{\mathcal O}_C\to L_i\to0.
\end{equation}
\begin{lem}\label{dspan}
Suppose $\frac{d_p}{p}\ge\frac{d_{p+1}}{p+1}$ for all $p<n-1$ and $d_{n-1}\ne (n-1)d_1$. Then $E_i$ is semistable and $h^0(E_i)=n$ for $i=1,2$.
\end{lem}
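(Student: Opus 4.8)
The plan is to establish the two assertions separately, after recording the basic numerical data. Dualising the evaluation sequence \eqref{extra} gives
\[0\to L_i^*\to H^0(L_i)^*\otimes\mathcal O_C\to E_i\to 0,\]
so $E_i$ has rank $n-1$, degree $d_{n-1}$ and slope $\mu(E_i)=\frac{d_{n-1}}{n-1}$, and it is globally generated. Since $\deg L_i^*<0$ forces $H^0(L_i^*)=0$, taking sections yields $h^0(E_i)\ge n$. Dually, evaluation on sections in \eqref{extra} is the identity on $H^0(L_i)$, so $h^0(E_i^*)=0$; equivalently, $E_i$ has no trivial quotient. These facts are the scaffolding for everything that follows.

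For semistability I would argue by contradiction, working with the minimal-slope quotient rather than a subbundle, since quotients of $E_i$ inherit global generation. Suppose $E_i$ is not semistable and let $Q$ be its minimal-slope quotient, of rank $s$ with $1\le s\le n-2$ and $\mu(Q)<\mu(E_i)$; then $Q$ is globally generated, and because $E_i$ has no trivial quotient, $Q\not\cong\mathcal O_C^{\,s}$, so $h^0(Q)\ge s+1$. The essential step is a gonality lower bound for $\deg Q$: quotienting $Q$ by $s-1$ general sections produces a globally generated line bundle $N$ with $\deg N=\deg Q$ and $h^0(N)\ge h^0(Q)-(s-1)$, whence $\deg Q\ge d_{h^0(Q)-s}$; refining this through a filtration of $Q$ by globally generated quotients, together with Lemma \ref{lempr}, one upgrades the estimate to $\deg Q\ge d_s$. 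Then $\mu(Q)=\frac{\deg Q}{s}\ge\frac{d_s}{s}\ge\frac{d_{n-1}}{n-1}=\mu(E_i)$, where the last inequality is the chain $\frac{d_s}{s}\ge\cdots\ge\frac{d_{n-1}}{n-1}$ supplied by the hypothesis $\frac{d_p}{p}\ge\frac{d_{p+1}}{p+1}$ (valid since $s\le n-2$); this contradicts $\mu(Q)<\mu(E_i)$.

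Granting semistability, to compute $h^0(E_i)$ I would write $h^0(E_i)=(n-1)+s$ with $s\ge1$ and apply Lemma \ref{lempr} to $E_i$ itself. If $E_i$ has no proper subbundle $F$ with $h^0(F)>r_F$, the lemma gives $d_{n-1}=d_{E_i}\ge d_{(n-1)s}$, which forces $s=1$ because the gonality sequence is strictly increasing (if $L$ computes $d_{r+1}$ then $L(-p)$ shows $d_r\le d_{r+1}-1$); hence $h^0(E_i)=n$. It therefore remains to exclude a proper subbundle $F$ with $h^0(F)>r_F$. Choosing $F$ of minimal rank $r$, Lemma \ref{lempr} applied to $F$ gives $\deg F\ge d_r$, while semistability gives $\deg F\le\frac{r\,d_{n-1}}{n-1}$; combined with $\frac{d_r}{r}\ge\frac{d_{n-1}}{n-1}$ this collapses all the intermediate slopes to equalities. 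In the rank-one case this yields $(n-1)d_1=d_{n-1}$, contradicting the hypothesis $d_{n-1}\ne(n-1)d_1$.

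I expect the main obstacle to be twofold. First, making the degree estimate $\deg Q\ge d_s$ genuinely rigorous: subbundles of $Q$ need not be globally generated, so the reduction to Lemma \ref{lempr} must be organised through quotients (which do remain globally generated) or through subbundles of $E_i$, and the bookkeeping of the correct index on $d_r$ is delicate. Second, and more serious, is the boundary analysis in the computation of $h^0(E_i)$: once the slope inequalities degenerate to equalities the argument is poised on a knife edge, and it is precisely there that $d_{n-1}\ne(n-1)d_1$ must break the tie. The rank-one case is clean, but reducing the higher-rank equality cases (those with $h^0(F)=r_F+1$ and slope exactly $\mu(E_i)$, which escape the cruder bound $\deg F\ge d_{r_F s_F}$) to the rank-one case, or excluding them directly, is the part I would expect to demand the most care.
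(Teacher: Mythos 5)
The paper's own proof of this lemma is essentially a citation: it dualises \eqref{extra} to get $h^0(E_i)\ge n$ and then quotes \cite[Proposition 4.9(d) and Theorem 4.15(a)]{cl}, so any self-contained argument such as yours is necessarily a different route. Your scaffolding (rank, degree, generation, $h^0(E_i)\ge n$, $h^0(E_i^*)=0$) is correct, and in fact your argument is \emph{complete} in the case $n=3$, which is the only case the paper ever uses: there $E_i$ has rank $2$, so the minimal-slope quotient $Q$ and any subbundle with extra sections are line bundles, and your rank-one analysis closes both halves (for $Q$ a nontrivial generated line bundle, $\deg Q\ge d_1\ge\frac{d_2}{2}$; for a line subbundle $L$ with $h^0(L)\ge2$, $d_1\le\deg L\le\frac{d_2}{2}\le d_1$ forces $d_2=2d_1$). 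For $n\ge4$, however, there are two genuine gaps.

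First, the key inequality $\deg Q\ge d_s$ is not established by the mechanism you describe. Generation plus absence of trivial quotients only guarantees $h^0(Q)\ge s+1$, so quotienting by $s-1$ general sections gives only $\deg Q\ge d_1$, and the proposed ``filtration of $Q$ by globally generated quotients'' cannot be organised: the successive \emph{kernels} in such a filtration are neither generated nor bounded below in degree, so their contributions cannot be estimated. The missing observation is one you never invoke: $Q$, being the minimal-slope Harder--Narasimhan quotient, is itself \emph{semistable}. Granting that, the argument closes with your own tools: if $Q$ has no proper subbundle $F$ with $h^0(F)>r_F$, Lemma \ref{lempr} gives $\deg Q\ge d_{st}\ge d_s$ (where $t=h^0(Q)-s\ge1$); if such an $F$ exists, take one of minimal rank $r$, so that Lemma \ref{lempr} (or, for $r=1$, the definition of $d_1$) gives $\deg F\ge d_r$, while semistability of $Q$ gives $\deg F\le r\mu(Q)<r\mu(E_i)\le d_r$, using the slope chain since $r<n-1$ --- a contradiction either way. (Alternatively one can dualise the evaluation sequence of $Q$, using $h^0(Q^*)=0$ to produce a generated bundle of rank $h^0(Q)-s$ carrying at least $s$ extra sections; this is the route taken in \cite{cl}.) Second, and more seriously, is the gap you yourself flag in the proof that $h^0(E_i)=n$: the hypothesis $d_{n-1}\ne(n-1)d_1$ only eliminates bad subbundles of rank one. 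If the minimal-rank subbundle $F$ with $h^0(F)>r_F$ has rank $r\ge2$, your equality analysis yields only $\frac{d_r}{r}=\frac{d_{n-1}}{n-1}$, which is perfectly consistent with the hypotheses (the chain may drop strictly from $d_1$ and then go flat), and recursing through the quotient $E_i/F$ reproduces the same flat-slope configurations (for instance $n=6$, $F$ of rank $3$ and slope $\mu(E_i)$ with $h^0(F)=4$, $E_i/F$ of rank $2$ and degree $d_2$ with $h^0=3$) without producing a contradiction. Excluding these is precisely where the dual-span structure of $E_i$, i.e.\ \cite[Proposition 4.9(d)]{cl}, does work that ``generated, semistable, $h^0(E_i^*)=0$'' alone does not; as it stands, your proof of the second assertion is valid only for $n=3$.
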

\begin{proof} Dualising \eqref{extra}, we see at once that $h^0(E_i)\ge n$. The result now follows from \cite[Proposition 4.9(d) and Theorem 4.15(a)]{cl}.
\end{proof}
Now consider non-trivial extensions 
\begin{equation}\label{e1l2}
0\to E_1\to E\to L_2\to 0.
\end{equation}
Note that, if the hypotheses of Lemma \ref{dspan} hold, then $h^0(E_1)=h^0(L_2)=n$, so $h^0(E)\le2n$.
\begin{lem}\label{nontriv}
Suppose that $h^0(E_1)=n$. Then there exists a non-trivial extension \eqref{e1l2} with $h^0(E)=2n$ if and only if $h^0(E_2\otimes E_1)>n^2$.
\end{lem}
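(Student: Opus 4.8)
The plan is to read off $h^0(E)$ from the long exact cohomology sequence of the extension \eqref{e1l2} and to translate the condition $h^0(E)=2n$ into the vanishing of a connecting map, which I then recognise as cup product with the extension class. Since $h^0(E_1)=n$ by hypothesis and $h^0(L_2)=n$ by construction, the sequence
\[
0\to H^0(E_1)\to H^0(E)\to H^0(L_2)\xrightarrow{\ \delta\ }H^1(E_1)
\]
gives $h^0(E)=n+\dim\ker\delta$, so $h^0(E)=2n$ precisely when $\delta=0$. Writing $e\in\operatorname{Ext}^1(L_2,E_1)=H^1(L_2^*\otimes E_1)$ for the class of the extension, the coboundary is $\delta(s)=s\cup e$; hence a non-trivial extension with $h^0(E)=2n$ exists if and only if there is a non-zero class $e$ annihilating every section of $L_2$ under cup product.

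Next I would package this as a single linear map $\mu\colon H^1(L_2^*\otimes E_1)\to\operatorname{Hom}(H^0(L_2),H^1(E_1))$ sending $e$ to $s\mapsto s\cup e$; then the desired extension exists iff $\ker\mu\neq0$. To compute $\ker\mu$ I bring in $E_2$. Dualising \eqref{extra} for $i=2$ yields $0\to L_2^*\to H^0(L_2)^*\otimes\mathcal O_C\to E_2\to0$; tensoring by $E_1$ and taking cohomology produces
\begin{multline*}
0\to H^0(L_2^*\otimes E_1)\to H^0(L_2)^*\otimes H^0(E_1)\xrightarrow{\ \alpha\ }H^0(E_2\otimes E_1)\\
\xrightarrow{\ \partial\ }H^1(L_2^*\otimes E_1)\xrightarrow{\ \beta\ }H^0(L_2)^*\otimes H^1(E_1).
\end{multline*}
The key observation is that $\beta=\mu$: the map $\beta$ is induced by the dual evaluation $L_2^*\hookrightarrow H^0(L_2)^*\otimes\mathcal O_C$, whose $s$-component is multiplication by $s\in H^0(L_2)$, so on $H^1$ it becomes $e\mapsto(s\mapsto s\cup e)$. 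Consequently $\ker\mu=\operatorname{im}\partial$, and the extension exists iff $\partial\neq0$.

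It then remains to count dimensions, and for the stated form of the criterion this forces the vanishing $H^0(L_2^*\otimes E_1)=\operatorname{Hom}(L_2,E_1)=0$. Granting this, $\alpha$ is injective with image of dimension $h^0(L_2)\cdot h^0(E_1)=n^2$, so $\partial\neq0$ is equivalent to $h^0(E_2\otimes E_1)>n^2$, which is the assertion. To prove the vanishing I would use that $E_1$, being the dual span, is globally generated: a non-zero map $L_2\to E_1$ is injective (a line bundle mapping non-trivially to a torsion-free sheaf), its saturation $M$ has degree $\ge\deg L_2=d_{n-1}$, and the globally generated quotient $E_1/M$ of rank $n-2\ge1$ has degree $\le0$, hence is trivial and $M=L_2$. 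Then $h^0(E_1)=n=h^0(L_2)$ forces $H^0(E_1)=H^0(L_2)$, so every section of $E_1$ takes values in the rank-one subsheaf $L_2$, contradicting the global generation of the rank-$(n-1)$ bundle $E_1$.

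The homological bookkeeping (the long exact sequences and the fact that the coboundary is cup product) is routine; the two steps needing genuine care are the identification $\beta=\mu$ in the $E_2$-sequence, which I would pin down by the explicit basis computation of the dual evaluation sketched above, and the vanishing $\operatorname{Hom}(L_2,E_1)=0$, which is exactly what makes the clean constant $n^2$ appear in place of $n^2-h^0(L_2^*\otimes E_1)$. I expect the latter to be the main obstacle, since it is the one point where both the hypothesis $h^0(E_1)=n$ and the dual-span origin of $E_1$ must be used.
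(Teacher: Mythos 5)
Your proof is correct and takes essentially the same route as the paper: both dualise the evaluation sequence for $L_2$, tensor with $E_1$, take cohomology, and use the vanishing $\operatorname{Hom}(L_2,E_1)=0$ (established by the same contradiction with the global generation of $E_1$ via the section count $h^0(L_2)=h^0(E_1)=n$) to identify the existence of the required extension with the non-surjectivity of $H^0(L_2)^*\otimes H^0(E_1)\to H^0(E_2\otimes E_1)$, i.e.\ with $h^0(E_2\otimes E_1)>n^2$. The only differences are presentational: you spell out the cup-product identification of the connecting homomorphism, which the paper leaves implicit, and your saturation/degree detour before the final contradiction is harmless but unnecessary.
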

\begin{proof} Clearly $h^0(E)=2n$ if and only if all sections of $L_2$ lift to $E$. We consider the dual of the sequence \eqref{extra} for $i=2$ tensored by $E_1$. If $\alpha:L_2\to E_1$ is a non-zero homomorphism, then $H^0(L_2)\subset H^0(E_1)$. Since both spaces have dimension $n$, it follows that all sections of $E_1$ have the form $\alpha\circ s$ for some $s\in H^0(L_2)$, contradicting the fact that $E_1$ is generated. So $H^0(L_2^*\otimes E_1)=0$ and, taking cohomology, we obtain an exact sequence
\begin{equation}\label{long}
0\to H^0(L_2)^*\otimes H^0(E_1)\stackrel{\psi}{\to} H^0(E_2\otimes E_1)\to H^1(L_2^*\otimes E_1)\stackrel{\varphi}{\to} H^0(L_2)^*\otimes H^1(E_1).
\end{equation}
An extension \eqref{e1l2} has the property that all sections of $L_2$ lift if and only if its class is in $\ker\varphi$. So there exists such an extension with this property if and only if $\psi$ fails to be surjective. Since 
$H^0(L_2)^*\otimes H^0(E_1)$ has dimension $n^2$, the result follows.
\end{proof}
\begin{lem}\label{cliff}
Suppose $E$ is as in Lemma \ref{nontriv}. If $\gamma(E)<\Cl$, then $2d_{n-1}<nd_1$. Conversely, if $d_1$ computes $\Cl$ and $2d_{n-1}<nd_1$, then $\gamma(E)<\Cl$.
\end{lem}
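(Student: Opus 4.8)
The plan is to reduce the statement to a single computation of $\gamma(E)$ together with the elementary comparison $\Cl\le d_1-2$. First I would read off the numerical invariants of $E$ from the two defining sequences. Since $h^0(L_i)=n$, the dual span $E_i$ defined by \eqref{extra} has rank $n-1$ and degree $d_{n-1}$, so the extension \eqref{e1l2} gives $r_E=n$ and $d_E=2d_{n-1}$. Because $E$ is as in Lemma \ref{nontriv} we have $h^0(E)=2n$, and hence
$$\gamma(E) = \frac{1}{n}\left(2d_{n-1} - 2(2n-n)\right) = \frac{2(d_{n-1}-n)}{n}.$$
Everything that follows is a rearrangement of the comparison between this quantity and $\Cl$.

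For the forward implication I would use that $g\ge7$ forces the gonality to satisfy $d_1\le\left[\frac{g+3}{2}\right]\le g-1$, so that the term $r=1$ enters the minimum defining $\Cl$ and therefore $\Cl\le d_1-2$. If $\gamma(E)<\Cl$, then $\gamma(E)<d_1-2$, that is $\frac{2(d_{n-1}-n)}{n}<d_1-2$; clearing denominators yields $2d_{n-1}-2n<nd_1-2n$, i.e.\ $2d_{n-1}<nd_1$, as required.

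For the converse, the hypothesis that $d_1$ computes $\Cl$ means precisely that $\Cl=d_1-2$. Then the assumption $2d_{n-1}<nd_1$ reverses the computation above: it gives $\frac{2(d_{n-1}-n)}{n}<d_1-2$, which is exactly $\gamma(E)<\Cl$.

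I do not expect a serious obstacle here, since the whole argument is one evaluation of $\gamma(E)$ followed by a two-sided inequality. The only point that genuinely needs care is the asymmetry between the two directions: the forward implication uses only the bound $\Cl\le d_1-2$ (and hence holds for any curve in the range under consideration), whereas the converse requires the sharper hypothesis that $r=1$ \emph{computes} $\Cl$. Without it, $\Cl$ could be strictly smaller than $d_1-2$, in which case $2d_{n-1}<nd_1$ would no longer be enough to force $\gamma(E)<\Cl$.
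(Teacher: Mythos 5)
Your proof is correct and follows essentially the same route as the paper: compute $\gamma(E)=\frac{2d_{n-1}}{n}-2$ from $d_E=2d_{n-1}$, $h^0(E)=2n$, and then compare with the inequality $\Cl\le d_1-2$, which becomes an equality exactly when $d_1$ computes $\Cl$. The only difference is that you spell out why $d_1\le g-1$ (so that $d_1$ enters the minimum defining $\Cl$), a point the paper leaves implicit.
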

\begin{proof}
We have $\gamma(E)=\frac1n(2d_{n-1}-2n)=\frac{2d_{n-1}}n-2$. Since $\Cl\le d_1-2$ with equality if $d_1$ computes $\Cl$, the result follows.
\end{proof}
\begin{lem}\label{n=3}
Suppose $C$ is a Petri curve of genus $g\ge7$ and $n\ge3$. Then $2d_{n-1}<nd_1$ except when $n=3, g=8, 10, 14$.
\end{lem}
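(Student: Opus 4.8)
The plan is to use the Petri hypothesis, which via \eqref{dr} makes that inequality an equality and supplies exact values for the gonality numbers: $d_1 = g + 1 - \left[\frac{g}{2}\right]$ and $d_{n-1} = g + n - 1 - \left[\frac{g}{n}\right]$. Substituting these into $2d_{n-1} < nd_1$, the terms involving $g$ alone combine as $(2-n)g + (n-2) = -(n-2)(g-1)$, so I would reduce the statement to the equivalent arithmetic inequality
\[
n\left[\frac{g}{2}\right] - 2\left[\frac{g}{n}\right] < (n-2)(g-1).
\]
All of the content now lies in the two floor functions.

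First I would dispatch all but finitely many cases by a crude estimate. Bounding $\left[\frac{g}{2}\right] \le \frac{g}{2}$ and $\left[\frac{g}{n}\right] > \frac{g}{n} - 1$, the left-hand side is strictly smaller than $\frac{ng}{2} - \frac{2g}{n} + 2$; comparing this with $(n-2)(g-1)$ and clearing denominators, one finds that the inequality holds whenever $(n-2)^2 g \ge 2n^2$, i.e. $g \ge \frac{2n^2}{(n-2)^2}$. Since the right-hand side is decreasing in $n$ for $n \ge 3$, this already covers every $g \ge 7$ when $n \ge 5$, every $g \ge 8$ when $n = 4$, and every $g \ge 18$ when $n = 3$.

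This leaves only the finite list of pairs $n = 4,\ g = 7$ and $n = 3,\ 7 \le g \le 17$, which I would settle by direct substitution in the reduced inequality. The case $n = 4,\ g = 7$ holds. For $n = 3$ the inequality reads $3\left[\frac{g}{2}\right] - 2\left[\frac{g}{3}\right] < g - 1$, and evaluating the floors according to the residue of $g$ modulo $6$ shows that it holds for $g \in \{7,9,11,12,13,15,16,17\}$ while it fails (by equality or by a reversed inequality) exactly at $g = 8, 10, 14$. These are precisely the claimed exceptions.

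I expect the finite check to be the real obstacle: the crude bound is far from sharp for small $n$---its threshold for $n = 3$ is only $g \ge 18$---so it cannot by itself detect the sporadic failures. These failures arise from the interaction between the residues of $g$ modulo $2$ and modulo $3$, and there seems to be no cleaner closed-form criterion than the explicit computation for $n = 3$.
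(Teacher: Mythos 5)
Your proof is correct and takes essentially the same route as the paper, whose entire proof is the sentence ``This follows by direct computation from the formulae for $d_r$'': you substitute the Petri values $d_r = g+r-\left[\frac{g}{r+1}\right]$ and carry out that computation, organizing it as an asymptotic bound ($g(n-2)^2\ge 2n^2$ suffices) plus a finite check of $n=4,\,g=7$ and $n=3,\,7\le g\le 17$. The reduction, the crude estimate, and the explicit evaluation identifying the exceptions $g=8,10,14$ (reversed inequality at $g=8$, equality at $g=10,14$) all check out.
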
 
\begin{proof} This follows by direct computation from the formulae for $d_r$ (see \eqref{dr}).
\end{proof}

\begin{prop}\label{ss}
Suppose that $d_1$ computes $\Cl$, $3d_1\ge2d_2$ and $\Clt=\Cl$. Suppose further that $n=3$ and the extension \eqref{e1l2}
$$0\to E_1\to E\to L_2\to 0$$ 
is non-trivial with $h^0(E)=6$. Then $E$ is semistable.
\end{prop}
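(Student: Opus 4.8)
The plan is to assume $E$ is not semistable and to extract a contradiction from the minimal-slope quotient in its Harder--Narasimhan filtration. First I would record three structural facts that the hypotheses supply. Since $3d_1\ge2d_2$ forces $d_2\le\tfrac32 d_1<2d_1$, the hypotheses of Lemma \ref{dspan} hold, so $E_1$ is semistable of slope $\tfrac{d_2}2$ with $h^0(E_1)=3$; dualising \eqref{extra} exhibits $E_1$ as a quotient of a trivial bundle, so $E_1$ is globally generated. As $h^0(E)=6=h^0(E_1)+h^0(L_2)$, the global-section sequence of \eqref{e1l2} is short exact, $H^0(E)\to H^0(L_2)$ is surjective, and hence $E$ is globally generated. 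Finally, dualising \eqref{e1l2} gives $0\to L_2^*\to E^*\to E_1^*\to0$ in which both outer terms have no sections (one has negative degree, the other is semistable of negative slope), so $h^0(E^*)=0$.

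With these facts, I would let $Q$ be the last graded piece of the Harder--Narasimhan filtration of $E$: it is semistable with $\mu(Q)<\mu(E)=\tfrac{2d_2}3$, and, being a quotient of the globally generated bundle $E$, it is itself globally generated. Its rank is $1$ or $2$, and throughout I would use $\tfrac{2d_2}3\le d_1\le g-1$. If $\operatorname{rank}Q=1$, then $Q$ is a globally generated line bundle $N$ with $0\le\deg N<\tfrac{2d_2}3$. The case $\deg N=0$ gives $N\cong\mathcal O_C$, hence a nonzero section of $E^*$, contradicting $h^0(E^*)=0$. If $\deg N\ge1$ then $h^0(N)\ge2$, and since $\deg N<\tfrac{2d_2}3\le g-1$ the bundle $N$ contributes to $\Cl$ with $\gamma(N)\le\deg N-2<\tfrac{2d_2}3-2\le d_1-2=\Cl$, a contradiction.

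The rank-$2$ case is where I expect the real work. Here $M:=\ker(E\to Q)$ is a line subbundle with $\deg M=2d_2-\deg Q>\tfrac{2d_2}3$. I would first show $M$ maps nontrivially to $L_2$: if not, $M\subseteq E_1$ and semistability of $E_1$ gives $\deg M\le\tfrac{d_2}2<\tfrac{2d_2}3$. So $M\hookrightarrow L_2$, forcing $\deg M\le d_2$, with equality excluded since $M\cong L_2$ would split \eqref{e1l2}; thus $M$ is a proper subsheaf of $L_2$. The crux is then the bound $h^0(M)\le2$: were $h^0(M)=h^0(L_2)=3$, every section of $L_2$ would vanish along the effective divisor of degree $d_2-\deg M\ge1$ cut out by $M\hookrightarrow L_2$, contradicting the global generation of $L_2$. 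Consequently $h^0(Q)\ge h^0(E)-h^0(M)\ge4$, so the semistable rank-$2$ bundle $Q$, with $\mu(Q)<\tfrac{2d_2}3\le g-1$, contributes to $\Clt$; yet $\gamma(Q)=\mu(Q)-h^0(Q)+2\le\mu(Q)-2<\tfrac{2d_2}3-2\le d_1-2=\Cl=\Clt$, contradicting the minimality defining $\Clt$.

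The main obstacle is precisely this rank-$2$ case: one must control $h^0(M)$ tightly enough to guarantee $h^0(Q)\ge4$, and it is here that the hypothesis $\Clt=\Cl$ is indispensable, since the contradiction is produced by the quotient $Q$ beating the rank-$2$ Clifford index. The rank-$1$ case and the structural preliminaries are comparatively routine, relying only on global generation, the vanishing $h^0(E^*)=0$, and the numerical input $\tfrac{2d_2}3\le d_1$ coming from $3d_1\ge2d_2$ together with $\Cl=d_1-2$.
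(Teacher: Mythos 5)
Your proof is correct and rests on the same two mechanisms as the paper's, but it is routed through the Harder--Narasimhan filtration rather than through destabilizing subbundles, and this buys a genuine simplification. Your rank-one case (minimal quotient $N$) is the paper's first case (a rank-two subbundle $F$ contradicting semistability, with $N=E/F$): both run on global generation of $E$ plus $h^0(E^*)=0$ to force $h^0(N)\ge2$, hence $\deg N\ge d_1\ge\frac{2d_2}{3}$, a contradiction. Your rank-two case (minimal quotient $Q$ with kernel $M$) is the paper's second case (a destabilizing line subbundle $L$, with $Q=E/L$): both show the line subbundle maps properly into $L_2$, bound its $h^0$ by $2$, and then play the quotient off against $\Clt=\Cl=d_1-2$. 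The real difference is that the paper, working with subbundles, must first prove that $E/L$ is stable before it can contribute to $\Clt$; it does so by assuming no rank-two subbundle destabilizes and pulling back line subbundles of $E/L$ to produce a contradiction. For you, the semistability of $Q$ is automatic from the HN filtration, so that entire sub-argument disappears. Two minor remarks: your bound $h^0(M)\le2$ via global generation of $L_2$ works, but the paper gets it in one line from the definition of $d_2$ (since $\deg M<d_2$); and you rightly observe that $3d_1\ge2d_2$ places you within the hypotheses of Lemma \ref{dspan}, which is the source of the semistability of $E_1$ needed to rule out $M\subseteq E_1$ --- a point the paper leaves implicit.
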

\begin{proof}
Suppose first that $F$ is a subbundle of $E$ of rank $2$ contradicting semistability. Then $d_{E/F}<\frac{2d_2}3$. Since $E$ is generated with $h^0(E^*)=0$, the same holds for $E/F$, so $E/F\not\simeq{\mathcal O}$ and hence $h^0(E/F)\ge2$ and $d_{E/F}\ge d_1$. This contradicts the hypothesis $3d_1\ge2d_2$.

Now suppose there is no subbundle of rank $2$ contradicting semistability, but that $L$ is a line subbundle with $d_L>\frac{2d_2}3$.  If $E/L$ is not stable, we can pull back a line subbundle of $E/L$ to get a subbundle $F$ of $E$ of rank $2$ with
$$d_F\ge\frac12d_{E/L}+d_L=\frac12(d_E+d_L)>\frac{4d_2}3.$$
This is a contradiction, so $E/L$ is stable. Since $L\not\subset E_1$ and \eqref{e1l2} is assumed non-trivial, we have $d_L<d_2$. So $h^0(L)\le2$ and $h^0(E/L)\ge4$. It follows that $E/L$ contributes to $\Clt$, so 
\begin{equation}\label{E/L}
d_{E/L}\ge2\Clt+4=2\Cl+4=2d_1.
\end{equation}
But $d_{E/L}<\frac{4d_2}3$, so this again contradicts the hypothesis $3d_1\ge2d_2$.
\end{proof}
\begin{thm}\label{main}
Suppose $C$ is a curve for which $d_1$ computes $\Cl$, $3d_1>2d_2$ and there exist $L_1$, $L_2$ of degree $d_2$ with $h^0(L_i)=3$ for which $h^0(E_2\otimes E_1)>9$. Then either $\Clt<\Cl$ or $\Clth<\Cl$.
\end{thm}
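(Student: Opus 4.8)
The plan is to set $n=3$ and apply the preceding lemmas to the extension \eqref{e1l2} built from the given $L_1$, $L_2$. First I would invoke Lemma \ref{nontriv}: since the hypothesis supplies line bundles with $h^0(E_2\otimes E_1)>9=n^2$, that lemma (with $h^0(E_1)=3$, which follows from Lemma \ref{dspan}) produces a non-trivial extension $0\to E_1\to E\to L_2\to 0$ with $h^0(E)=6$. This is the bundle on which everything hinges.

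Next I would check that $E$ has the Clifford index we want. By Lemma \ref{cliff}, the hypothesis that $d_1$ computes $\Cl$ together with $2d_2<3d_1$ (i.e. $3d_1>2d_2$, which is exactly our assumption with $n=3$) gives $\gamma(E)<\Cl$. So $E$ is a rank-$3$ bundle with $h^0(E)=6\ge 2\cdot 3$ and $\gamma(E)<\Cl$; note also $\mu(E)=\frac{2d_2}{3}\le g-1$ since $d_2\le g-1$ (as $d_2$ arises from the gonality sequence with $d_2-4\ge\Cl\ge 3$ well below $g-1$ in the relevant range, and in any case $\mu(E)<d_1-1\le g-1$). The remaining question is whether this $E$ actually \emph{contributes} to $\Clth$, which requires semistability.

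Here is the dichotomy that yields the disjunction in the conclusion. I would split on whether $E$ is semistable. If it is, then $E$ contributes to $\Clth$ and $\Clth\le\gamma(E)<\Cl$, giving the second alternative. Proposition \ref{ss} provides exactly the semistability statement, but only under the extra hypothesis $\Clt=\Cl$; so the real content is the case $\Clt\ne\Cl$. The main obstacle is therefore to argue that if $E$ fails to be semistable, then the failure forces $\Clt<\Cl$. I expect to reuse the destabilising-subbundle analysis from the proof of Proposition \ref{ss}: there the inequality $d_{E/L}\ge 2\Clt+4$ was combined with $\Clt=\Cl$ to derive a contradiction; without that equality, the same subbundle computation instead exhibits a semistable rank-$2$ quotient (or the associated bundle $E/L$) contributing to $\Clt$ with $\gamma$-value strictly below $\Cl$, i.e. $\Clt<\Cl$. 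Concretely, a destabilising $L$ with $d_L>\frac{2d_2}{3}$ gives $E/L$ of rank $2$ with $h^0(E/L)\ge 4$ and $d_{E/L}<\frac{4d_2}{3}<2d_1=2\Cl+4$, whence $\gamma(E/L)=\frac{d_{E/L}-4}{2}<\Cl$, so $\Clt<\Cl$; a destabilising rank-$2$ subbundle is ruled out as in Proposition \ref{ss} by $3d_1>2d_2$.

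Putting the two cases together: either $E$ is semistable and $\Clth\le\gamma(E)<\Cl$, or $E$ is not semistable and the destabilising datum directly yields $\Clt<\Cl$. In both cases one of $\Clt<\Cl$, $\Clth<\Cl$ holds, which is the assertion. The delicate point to handle carefully is the borderline $\mu$ comparison in \eqref{E/L} and the replacement of the strict inequality $3d_1>2d_2$ (as opposed to the non-strict $3d_1\ge 2d_2$ of Proposition \ref{ss}) throughout the subbundle estimates; the strict inequality is precisely what lets the non-semistable case produce a \emph{strict} drop $\Clt<\Cl$ rather than merely $\Clt\le\Cl$.
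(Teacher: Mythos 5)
Your proof is correct and follows essentially the same route as the paper, which deduces the theorem from Lemmas \ref{dspan}, \ref{nontriv}, \ref{cliff} and Proposition \ref{ss} by splitting on whether $\Clt=\Cl$ or $\Clt<\Cl$ (using $\Clt\le\Cl$); you instead split on semistability of $E$ and unfold the proof of Proposition \ref{ss} in the unstable case to exhibit the quotient $E/L$ as a direct witness for $\Clt<\Cl$, which is just a contrapositive reorganization of the same argument (and is the same observation the paper itself records in Remark \ref{2<1}). The only blemish is your justification of $\mu(E)\le g-1$: the clean reason is that $\mu(E)=\frac{2d_2}{3}<d_1\le g-1$, the last inequality because $d_1$ computes $\Cl$.
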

\begin{proof}
This follows from Lemmas \ref{dspan}, \ref{nontriv} and \ref{cliff} and Proposition \ref{ss}.
\end{proof}
\begin{rmk}\label{2<1}\begin{em}
Suppose that all the hypotheses of Proposition \ref{ss} hold except that $\Clt<\Cl$. Then $\Cl\ge5$ by \cite[Proposition 3.8]{cl} and hence $g\ge11$; moreover, by \cite[Theorem 5.2]{cl}, we have $\Clt\ge\frac{d_4}2-2$. It follows that the proof of the proposition is valid except that \eqref{E/L} must be replaced by
\begin {equation}\label{E/L2}
d_{E/L}\ge d_4.
\end{equation}
If $3d_4\ge4d_2$, this contradicts the assumption that $E$ is not semistable. 
\end{em}\end{rmk}
\begin{cor}\label{maincor}
Suppose $C$ is a curve for which $d_1$ computes $\Cl$, $3d_1>2d_2$,  $3d_4\ge4d_2$ and there exist $L_1$, $L_2$ of degree $d_2$ with $h^0(L_i)=3$ for which $h^0(E_2\otimes E_1)>9$. Then $\Clth<\Cl$.
\end{cor}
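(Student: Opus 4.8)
The plan is to run the construction underlying Theorem \ref{main} and then to show that, thanks to the extra hypothesis $3d_4\ge4d_2$, the bundle it produces is always semistable, so that it contributes to $\Clth$ and forces $\Clth<\Cl$. First I would check the hypotheses of Lemma \ref{dspan}: from $3d_1>2d_2$ one gets $2d_1>d_2$, whence $d_1\ge\frac{d_2}{2}$ and $d_2\ne2d_1$, so each dual span $E_i$ is semistable of rank $2$ with $h^0(E_i)=3$. Since $h^0(E_2\otimes E_1)>9$, Lemma \ref{nontriv} yields a non-trivial extension \eqref{e1l2} with $h^0(E)=6$, and Lemma \ref{cliff} (using that $d_1$ computes $\Cl$ and $2d_2<3d_1$) gives $\gamma(E)<\Cl$. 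I would also record that $E$ has rank $3$ and degree $2d_2$, so $\mu(E)=\frac{2d_2}{3}<d_1\le g-1$, the last step because $d_1$ computes $\Cl$; hence as soon as $E$ is known to be semistable it contributes to $\Clth$ and gives $\Clth\le\gamma(E)<\Cl$.

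It therefore remains only to prove that $E$ is semistable. I would split into the cases $\Clt=\Cl$ and $\Clt<\Cl$, which are exhaustive since $\Clt\le\Cl$. If $\Clt=\Cl$, all the hypotheses of Proposition \ref{ss} hold ($d_1$ computes $\Cl$, $3d_1\ge2d_2$, $n=3$, and \eqref{e1l2} non-trivial with $h^0(E)=6$), so $E$ is semistable. If $\Clt<\Cl$, I would invoke Remark \ref{2<1}: the argument of Proposition \ref{ss} is unchanged for a rank-$2$ destabilising subbundle, while for a destabilising line subbundle $L$ the only modification is that \eqref{E/L} is replaced by \eqref{E/L2}, i.e. $d_{E/L}\ge d_4$, via the bound $\Clt\ge\frac{d_4}{2}-2$ of \cite[Theorem 5.2]{cl}. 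Together with $d_{E/L}<\frac{4d_2}{3}$ this gives $d_4<\frac{4d_2}{3}$, that is $3d_4<4d_2$, contradicting $3d_4\ge4d_2$; so no destabilising $L$ exists and $E$ is semistable in this case too.

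In either case $E$ is semistable of rank $3$ with $h^0(E)=6=2\cdot3$ and $\mu(E)\le g-1$, hence contributes to $\Clth$, and $\gamma(E)<\Cl$ then yields $\Clth<\Cl$. The one genuinely delicate point --- and the reason for the hypothesis $3d_4\ge4d_2$ --- is the case $\Clt<\Cl$: there the clean estimate $d_{E/L}\ge2\Cl+4=2d_1$ used in Proposition \ref{ss} is unavailable, and the degree of the stable quotient $E/L$ must instead be controlled through the sharper lower bound on $\Clt$ from \cite[Theorem 5.2]{cl}. Everything else is a direct reassembly of the lemmas already proved, the only additional observation being that $\mu(E)=\frac{2d_2}{3}<d_1\le g-1$ ensures $E$ really does contribute to $\Clth$.
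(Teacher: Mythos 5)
Your proof is correct and follows essentially the same route as the paper, whose own proof is simply ``this follows from the proof of Theorem \ref{main} and Remark \ref{2<1}'': you rebuild $E$ via Lemmas \ref{dspan}, \ref{nontriv} and \ref{cliff}, then establish semistability by Proposition \ref{ss} when $\Clt=\Cl$ and by Remark \ref{2<1} (where $3d_4\ge4d_2$ rules out a destabilising line subbundle) when $\Clt<\Cl$. The only additions are explicit verifications the paper leaves implicit, such as $\mu(E)=\frac{2d_2}{3}<d_1\le g-1$, which are fine.
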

\begin{proof}
This follows from the proof of Theorem \ref{main} and Remark \ref{2<1}.
\end{proof} 
\begin{rmk}\label{d4}\begin{em}
The assumption $3d_4\ge4d_2$ holds for Petri curves of genus $12$, $13$, $14$, $15$, $18$, $19$ and $24$ (also for genus $\le10$, but in this case $\Clt=\Cl$, so the corollary does not lead to any improvement).
\end{em}\end{rmk} 

\section{Curves of genus $9$ and $11$}\label{911}
To find examples of curves for which $\Clth<\Cl$, it remains only to choose $C$ suitably and then show that there exist $L_1$, $L_2$ as in the statement of Theorem \ref{main} such that  $h^0(E_2\otimes E_1)>9$. 

For $g=9$, Mukai \cite[Proposition 1.2]{mu2} proves this by a  very special argument which works also for $g=11$.  This is based on a result of Mumford \cite{mm}; for completeness and in view of possible generalisations, we give a proof using only Mumford's result.
\begin{prop}\label{mukai}
With the notations of the previous section, suppose $n=3$ and let $C$ be a Petri curve of genus $9$ or $11$. Then there exist $E_1$, $E_2$ as above such that $h^0(E_2\otimes E_1)>9$.
\end{prop}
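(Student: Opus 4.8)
The plan is first to make an optimal choice of the two line bundles and then to reduce the estimate to a single multiplication map, which is precisely where Mumford's result \cite{mm} enters. Both genera satisfy $2d_2=2g-2=\deg K_C$, so I would fix a line bundle $L_1$ of degree $d_2$ with $h^0(L_1)=3$ — such a bundle exists because $C$ is a Petri curve and the Brill--Noether number $\rho(g,2,d_2)=g-3(g-d_2+2)$ is non-negative for $g=9,11$ — and then set $L_2:=K_C\otimes L_1^{-1}$. Riemann--Roch and Serre duality give $\deg L_2=d_2$ and $h^0(L_2)=h^1(L_2)=h^0(L_1)=3$, so $L_1,L_2$ are of the required type, the associated dual spans $E_1,E_2$ are semistable with $h^0(E_i)=3$ by Lemma \ref{dspan}, and, crucially, $L_1\otimes L_2=K_C$.

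The point of this choice is that it forces several coincidences. Since $\det E_i=L_i$ and $\operatorname{rk}E_i=2$ we have $E_i^*\cong E_i\otimes L_i^{-1}$, whence $E_1\otimes L_2\cong E_1^*\otimes K_C$ and $(E_1\otimes E_2)^*\otimes K_C\cong E_1\otimes E_2$; in particular $\chi(E_1\otimes E_2)=0$, so $h^0(E_1\otimes E_2)=h^1(E_1\otimes E_2)$ and it suffices to produce a single section beyond the $9=h^0(E_1)h^0(E_2)$ evident ones. Moreover the multiplication map $H^0(L_1)\otimes H^0(L_2)\to H^0(K_C)$ is the Petri map of $L_1$, hence injective; for $g=9$ it is even an isomorphism, both sides having dimension $9$. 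This last coincidence is what singles out genus $9$ (and, with a two-dimensional surplus, genus $11$) and ought ultimately to produce the extra section.

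Next I would reduce the problem using the exact sequence \eqref{long} of Lemma \ref{nontriv}, in which $\psi$ is injective, so that $h^0(E_2\otimes E_1)=9+\dim\ker\varphi$ with $\varphi\colon H^1(L_2^*\otimes E_1)\to H^0(L_2)^*\otimes H^1(E_1)$. Thus the whole statement is equivalent to the assertion that $\varphi$ is not injective. By Serre duality $\varphi$ is dual to the multiplication map
\begin{equation*}
\mu\colon H^0(L_2)\otimes H^0(E_1\otimes L_2)\longrightarrow H^0(E_1\otimes L_2^{2}),
\end{equation*}
so I must show that $\mu$ fails to be surjective. Using $E_1\otimes L_2\cong E_1^*\otimes K_C$ and resolving $E_1$ by its Euler sequence \eqref{extra}, one can rewrite the source and target of $\mu$ through the canonical multiplication maps $H^0(L_1)\otimes H^0(K_C)\to H^0(L_1\otimes K_C)$ and $H^0(K_C)\otimes H^0(K_C)\to H^0(K_C^{2})$, whose ranks are controlled by Mumford's result \cite{mm} together with Max Noether's theorem (both applicable since $\Cl\ge 3$ forces $C$ to be non-hyperelliptic, non-trigonal and not a plane quintic).

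The main obstacle is this final step: showing that $\operatorname{coker}\mu\ne 0$. I expect to use Mumford's result to determine exactly the ranks of the auxiliary canonical multiplication maps, and then to deduce the non-surjectivity of $\mu$ from a dimension count forced by the coincidence $h^0(L_1)h^0(L_2)=g$ at $g=9$ (equivalently, by the existence of quadrics through the canonical curve). For $g=11$ the same scheme should apply, the delicate difference being that $H^0(L_1)\otimes H^0(L_2)\to H^0(K_C)$ is injective with a two-dimensional cokernel; the point to be verified is that this surplus is accounted for correctly and still leaves $\operatorname{coker}\mu\ne 0$. In either case this gives $\dim\ker\varphi\ge 1$, hence $h^0(E_2\otimes E_1)\ge 10>9$, which is the assertion.
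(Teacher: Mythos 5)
Your setup is sound and, up to a point, matches the paper's: the choice $L_2=K_C\otimes L_1^{-1}$, the observation that $E_2\otimes E_1$ is self-dual up to a twist by $K_C$, and the reduction via \eqref{long} and Serre duality of the statement to the non-surjectivity of the multiplication map $\mu\colon H^0(L_2)\otimes H^0(E_1\otimes L_2)\to H^0(E_1\otimes L_2^2)$ are all correct. But the step that would actually prove something is missing, and the route you sketch for it cannot work. A dimension count is hopeless here: one has $h^0(E_1\otimes L_2)=h^1(E_1)=g+2$ and $h^0(E_1\otimes L_2^2)=3(g-1)$ (the relevant $h^1$ vanishes by semistability), so for $g=9$ the map $\mu$ goes from a $33$-dimensional space to a $24$-dimensional space, and for $g=11$ from a $39$-dimensional space to a $30$-dimensional one; no count of dimensions, and no knowledge of the ranks of auxiliary canonical maps such as $H^0(K_C)\otimes H^0(K_C)\to H^0(K_C^2)$, can show that a map from a larger space to a smaller one fails to be surjective. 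You have also misread what Mumford's theorem in \cite{mm} says: it is not a tool for computing ranks of multiplication maps (that is Noether--Petri territory), but a parity statement --- in a family of vector bundles carrying non-degenerate $K_C$-valued quadratic forms, $h^0\bmod 2$ is constant.

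That parity statement is precisely the idea your proposal lacks, and it is how the paper concludes. The canonical pairing $(E_2\otimes E_1)\otimes(E_2\otimes E_1)\to\bigwedge^2E_2\otimes\bigwedge^2E_1\simeq L_2\otimes L_1\simeq K_C$ makes $E_2\otimes E_1$ an orthogonal $K_C$-twisted bundle; choosing a line subbundle $L\subset E_1$ with quotient $M=E_1/L$ and tensoring the extensions $0\to L\to F\to M\to 0$ with $E_2$ produces a family of such bundles joining $E_2\otimes E_1$ to $E_2\otimes(L\oplus M)$. Mumford's theorem then gives $h^0(E_2\otimes E_1)\equiv h^0(E_2\otimes L)+h^0(E_2\otimes M)\bmod 2$; since $E_2\otimes M\simeq K_C\otimes(E_2\otimes L)^*$, Serre duality and Riemann--Roch make the right-hand side congruent to $\deg E_2=g-1\bmod 2$, which is even exactly because $g$ is odd. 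Combined with $h^0(E_2\otimes E_1)\ge 9$ (injectivity of $\psi$, which you did establish), evenness forces $h^0(E_2\otimes E_1)\ge 10$. Note that this argument treats $g=9$ and $g=11$ identically; the coincidence $h^0(L_1)\cdot h^0(L_2)=g$ at $g=9$ that you single out plays no role, and indeed your scheme, as you concede, has no clear way to handle $g=11$.
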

\begin{proof}
Note first (see \eqref{dr}) that $d_2=g+2-\left[\frac{g}3\right]=g-1$ in both cases. Take any line bundle $L_1$ of degree $d_2$ with $h^0(L_1)=3$ and put $L_2=L_1^*\otimes K_C$. Then $d_{L_2}=d_2$ and $h^0(L_2)=3$. Moreover
$$\det E_1\otimes \det E_2\simeq L_1\otimes L_2\simeq K_C.$$
The canonical homomorphism
$$(E_2\otimes E_1)\otimes(E_2\otimes E_1)\to\bigwedge^2E_2\otimes\bigwedge^2E_1\simeq K_C$$ 
defines a $K_C$-valued quadratic form $Q$ on $E_2\otimes E_1$. 
Now let $L$ be a line subbundle of $E_1$, put $M=E_1/L$ and consider the family of  extensions
$$0\to L\to F\to M\to0$$
with $L$ and $M$ fixed. Tensoring by $E_2$, we obtain a family
\begin{equation}\label{eqn:fam}
0\to E_2\otimes L\to E_2\otimes F\to E_2\otimes M\to 0.
\end{equation}
The quadratic form $Q$ extends to a quadratic form on the family \eqref{eqn:fam}. It follows from the theorem of  \cite{mm} (see \cite[p186, Application (5)]{mm}) that
$$h^0(E_2\otimes E_1)\equiv h^0(E_2\otimes L)+h^0(E_2\otimes M)\bmod2.$$
On the other hand, by Serre duality and Riemann-Roch,
$$h^0(E_2\otimes L)+h^0(E_2\otimes M)=h^0(E_2\otimes L)+h^1(E_2\otimes L)\equiv\deg E_2\bmod2.$$
Now $\deg E_2=g-1$ is even, so $h^0(E_2\otimes E_1)$ is even. The result now follows from \eqref{long}.
\end{proof}
\begin{rmk}\label{r1}
\begin{em}The only place where $C$ being Petri is used here is in the requirement that $d_2=g-1$. For genus $9$, this is true whenever $\Cl$ takes its maximum value $4$. In fact, we have in this case $d_2\ge \Cl+4=8$ and (by \eqref{dr}) $d_2\le g+2-\left[\frac{g}3\right]=8$. We have also $d_1=6$ (in the language of Mukai's papers, $C$ is ``non-pentagonal'') and hence $3d_1>2d_2$.
\end{em}\end{rmk}

\begin{thm}\label{g=9} Let $C$ be a curve of genus $9$ with Clifford index $\Cl=4$. Then \begin{itemize}
\item[(i)] $\Clr<\Cl$ whenever $r$ is divisible by $3$;
\item[(ii)] $3\le\Clth\le\frac{10}3$;
\item[(iii)] the expected dimension of the Brill-Noether locus $B(3,16,6)$ is $-11$, but this locus is non-empty.
\end{itemize}
\end{thm}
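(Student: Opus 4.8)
The plan is to extract all three parts from the single bundle produced by the construction of Section~\ref{mainsec}. For a curve of genus $9$ with $\Cl=4$, Remark~\ref{r1} gives $d_1=6$, $d_2=8$ and $3d_1>2d_2$, while $\Clt=\Cl$ by Remark~\ref{d4}; thus the hypotheses of Theorem~\ref{main} and Proposition~\ref{ss} all hold. Proposition~\ref{mukai} provides $L_1,L_2$ of degree $d_2=8$ with $h^0(E_2\otimes E_1)>9$, so by Lemma~\ref{nontriv} there is a non-trivial extension $0\to E_1\to E\to L_2\to0$ with $h^0(E)=6$, which Proposition~\ref{ss} shows to be semistable. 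This $E$ has rank $3$ and degree $d_{E_1}+d_{L_2}=8+8=16$, so $\mu(E)=\frac{16}{3}\le g-1$ and $\gamma(E)=\frac13(16-2\cdot3)=\frac{10}{3}$. Since $\gcd(3,16)=1$, no proper subbundle can have slope exactly $\frac{16}{3}$, so $E$ is in fact stable; moreover $\det E\cong L_1\otimes L_2\cong K_C$, so this is Mukai's bundle.

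Granting $E$, parts (i), the upper bound of (ii), and (iii) are immediate. For (i), write $r=3m$; then $E^{\oplus m}$ is semistable of slope $\frac{16}{3}\le g-1$ with $h^0=6m\ge2r$, and $\gamma(E^{\oplus m})=\gamma(E)=\frac{10}{3}$, so $\Clr\le\frac{10}{3}<4=\Cl$. The upper bound of (ii) is the case $m=1$, since $E$ contributes to $\Clth$. For (iii), $E$ is stable with $h^0(E)=6$, hence $E\in B(3,16,6)$ and the locus is non-empty; the expected dimension comes from substituting $n=3$, $d=16$, $k=6$, $g=9$ (with $\dim M(3,16)=n^2(g-1)+1=73$ and Brill--Noether codimension $k(k-d+n(g-1))=84$), giving $-11$.

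The substance of the theorem is the lower bound $\Clth\ge3$ of (ii). Let $E$ be semistable of rank $3$ with $\mu(E)\le g-1=8$ and $h^0(E)=3+s$, where $s\ge3$ (as $h^0\ge2n=6$); I must show $d_E\ge2s+9$. When $s=3$ this is immediate from the input recalled in the introduction: by \cite{cl1} a semistable rank-$3$ bundle with $h^0=6$ has $d_E\ge15$, and $d_E=15$ is ruled out by Proposition~\ref{comm1}, so $d_E\ge16$ and $\gamma(E)\ge\frac{10}{3}>3$. For $s\ge4$ I would split on whether $E$ has a proper subbundle $F$ with $h^0(F)>r_F$. If it does not, Lemma~\ref{lempr} gives $d_E\ge d_{3s}$; since $3s\ge g=9$ one has $d_{3s}=3s+g=3s+9$, whence $\gamma(E)\ge\frac13(3s+9-2s)=\frac{s+9}{3}>3$.

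It remains to treat $s\ge4$ when $E$ \emph{does} have a proper subbundle $F$ with $h^0(F)>r_F$, and this is where the real work lies. The tool is the elementary inequality $\gamma(E)\ge\frac13\bigl(r_F\gamma(F)+r_Q\gamma(Q)\bigr)$ for $Q=E/F$, which holds because $h^0(E)\le h^0(F)+h^0(Q)$: taking $F$ a line subbundle of largest $h^0$ makes $\gamma(F)\ge\Cl=4$ once $h^0(F)\ge2$ (by semistability $d_F\le\mu(E)\le g-1$, so $F$ contributes to $\Cl$), and the saturated rank-$2$ quotient $Q$ should contribute to $\Clt=\Cl$, forcing $\gamma(E)\ge4$. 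The main obstacle is the degenerate configurations where this estimate collapses: a sub-line-bundle with too few sections or too small degree (e.g.\ a trivial subbundle, where $\gamma=0$), or a quotient $Q$ with $\mu(Q)>g-1$ or $h^0(Q)<4$ that fails to contribute to $\Clt$. I expect to clear these by the very mechanism of Proposition~\ref{ss}, pulling back a destabilizing subbundle of $Q$ to produce a rank-$2$ subbundle of $E$ and playing the numerical hypothesis $3d_1>2d_2$ against the semistability of $E$; combining this with the two cases above yields $\Clth\ge3$.
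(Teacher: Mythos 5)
Most of your proposal is sound and matches the paper: the construction of the stable bundle $E$ of degree $16$ via Lemmas \ref{dspan}, \ref{nontriv} and Propositions \ref{mukai}, \ref{ss} (using Remark \ref{r1} to avoid the Petri hypothesis), the $E^{\oplus m}$ argument for (i) (a direct substitute for the paper's appeal to \cite[Lemma 2.2]{cl}), the upper bound $\Clth\le\frac{10}3$, and part (iii) are all correct. One small correction: the fact $\Clt=\Cl$ when $\Cl=4$ should be cited from \cite[Proposition 3.8]{cl}, as the paper does; Remark \ref{d4} asserts this only for Petri curves of genus $\le 10$, whereas the theorem assumes only $g=9$ and $\Cl=4$.

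The genuine gap is the lower bound $\Clth\ge3$ in (ii). The paper obtains this in one line from \cite[Theorem 4.1]{cl1}, which gives $\Clth\ge\frac{2\Clt+1}{3}=3$ since $\Clt=\Cl=4$; that theorem is the main content of an entire separate paper, and you have attempted to reprove it. Your cases $s=3$ and ``no proper subbundle $F$ with $h^0(F)>r_F$'' are handled correctly, but the remaining case ($s\ge4$ with such a subbundle) is only a plan, not a proof: ``I expect to clear these by the very mechanism of Proposition \ref{ss}'' is not an argument, and the obstacles you name are real. Concretely, choosing $F$ to be a line subbundle of maximal $h^0$ gives nothing when the subbundle with $h^0>r_F$ has rank $2$ and $E$ has no line subbundle with $h^0\ge2$ (then $\gamma(F)$ has no useful lower bound); and even when such a line subbundle $L$ exists, the quotient $Q=E/L$ need not be semistable, need not satisfy $\mu(Q)\le g-1$ (since $d_E$ may be as large as $24$), and need not have $h^0(Q)\ge4$, so it need not contribute to $\Clt$ at all. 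The mechanism of Proposition \ref{ss} does not transfer: there one proves semistability of one specific extension with $h^0=6$, using the known structure \eqref{e1l2}, whereas here you must bound $\gamma$ for an arbitrary semistable bundle with $h^0\ge7$. The correct repair is simply to quote \cite[Theorem 4.1]{cl1} for the bound $\Clth\ge3$, which is exactly what the paper does.
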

\begin{proof} 
(i) If $r$ is divisible by $3$, $\Clr\le \Clth$ by \cite[Lemma 2.2]{cl}, so it is sufficient to prove that $\Clth<\Cl$. When $\Cl=4$, we have $\Clt=\Cl$ by \cite[Proposition 3.8]{cl}. The result now follows from Theorem \ref{main}, Proposition \ref{mukai} and Remark \ref{r1}.

(ii) By Propositions \ref{mukai} and \ref{ss}, there exists a semistable bundle $E$ of rank $3$ and degree $2d_2=16$ with $h^0(E)=6$. Hence $\gamma(E)=\frac{10}3$, so $\Clth\le\frac{10}3$. On the other hand, by \cite[Theorem 4.1]{cl1}, $\Clth\ge3$.

(iii) We have $\beta(3,16,6)=9g-8-6(6-16+3g-3)=-11$. The bundle $E$ is semistable and therefore stable since $\gcd(3,16)=1$, so $B(3,16,6)\ne\emptyset$. 
\end{proof}

For $g=11$, there is more work to do. For a Petri curve of genus $11$, we have $\Cl=5$ and $d_4=13$. It follows from  \cite[Theorem 5.2]{cl} that
\begin{equation}\label{gamma2'}
\frac92\le\Clt\le5.
\end{equation} 
We wish to investigate the possibility that $\Clt=\frac92$. We begin with a lemma which generalises part of \cite[Theorem 5.2]{cl}.
\begin{lem}\label{gen}
Let $C$ be any smooth curve and $F$ a semistable bundle of rank $2$ and slope $\mu(F)\le g-1$ on $C$ with $h^0(F)=n+s$, $s>0$. Then
$$\gamma(F)\ge\min\left\{\Cl,\frac{d_{2s}}2-s\right\}.$$
\end{lem}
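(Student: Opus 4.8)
The plan is to reduce the statement to the degree inequality $d_F \ge \min\{d_{2s},\,2\Cl+2s\}$, which is equivalent to what is claimed because for a rank-$2$ bundle $\gamma(F)=\mu(F)-s=\tfrac12 d_F-s$ (writing $h^0(F)=2+s$). Throughout, the key elementary input is semistability in the form: every sub-line-bundle $L\subseteq F$ has $\deg L\le\mu(F)=\tfrac12 d_F$, so that
\[
\gamma(F)=\tfrac12 d_F-s\ \ge\ \deg L-s .
\]
I would first treat the case in which $F$ has no sub-line-bundle $L$ with $h^0(L)\ge2$. Then $F$ has no proper subbundle $G$ with $h^0(G)>r_G$, so Lemma \ref{lempr} applies with $n=2$ and gives $h^0(\det F)\ge 2s+1$, hence $d_F\ge d_{2s}$ and $\gamma(F)\ge\frac{d_{2s}}{2}-s$.

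From now on I may assume $F$ has a saturated sub-line-bundle $L$ with $a:=h^0(L)\ge2$, so that $M:=F/L$ is a line bundle with $\deg L+\deg M=d_F$. Semistability forces $\deg L\le g-1$, and then Riemann--Roch gives $h^1(L)\ge h^0(L)\ge2$, so $L$ contributes to $\Cl$ and $\deg L\ge\Cl+2(a-1)$. In this case the goal is the sharper bound $\gamma(F)\ge\Cl$, i.e. $d_F\ge2\Cl+2s$, and I would play the contribution of $L$ against the quotient $M$, splitting into three cases. If $h^0(M)\le1$, then $h^0(F)\le h^0(L)+h^0(M)$ gives $a\ge s+1$, and the displayed inequality already yields $\gamma(F)\ge\Cl+2(a-1)-s\ge\Cl+s$. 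If $h^0(M)\ge2$ and $h^1(M)\ge2$, then $M$ also contributes to $\Cl$, so $\deg M\ge\Cl+2(h^0(M)-1)$; adding this to the bound for $\deg L$ and using $h^0(L)+h^0(M)\ge2+s$ gives $d_F\ge2\Cl+2s$ immediately.

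The remaining case $h^0(M)\ge2$, $h^1(M)\le1$ is where I expect the one genuine difficulty to lie: the quotient $M$ may have degree larger than $g-1$ and so need not contribute to $\Cl$, which is exactly what defeats the crude ``both factors contribute'' argument. Two points handle it. First, for the previous case one should note that whenever $h^1(M)\ge2$ one has $\operatorname{Cliff}(M)=\operatorname{Cliff}(K_C\otimes M^*)\ge\Cl$ by Serre duality, since $K_C\otimes M^*$ then has $h^0=h^1(M)\ge2$ and $h^1=h^0(M)\ge2$; this is what legitimises ``$M$ contributes'' even when $\deg M>g-1$. Second, in the present case Riemann--Roch gives $\deg M\ge h^0(M)+g-2$, while $\deg M=d_F-\deg L\le 2(g-1)-\deg L\le 2g-\Cl-2a$; comparing these two estimates bounds $h^0(M)$, and hence $s$, giving $s\le g-\Cl-a$. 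A direct substitution then yields $d_F\ge\Cl+a+s+g-2\ge2\Cl+2s$, the last step needing only $a\ge2$. Combining the three cases gives $\gamma(F)\ge\Cl$, and with the first reduction this proves the lemma. I anticipate that only the bookkeeping in this last case, together with the correct use of Serre duality to deal with quotient line bundles of degree exceeding $g-1$, is anything other than routine.
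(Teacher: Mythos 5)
Your proof is correct and uses the same decomposition as the paper's: the dichotomy on whether $F$ admits a line subbundle $L$ with $h^0(L)\ge 2$, with Lemma \ref{lempr} giving $d_F\ge d_{2s}$ and hence $\gamma(F)\ge\frac{d_{2s}}2-s$ in the negative case. In the positive case the paper simply cites the first part of the proof of \cite[Theorem 5.2]{cl} for the bound $\gamma(F)\ge\Cl$; your three-subcase analysis of $M=F/L$ (including the Serre-duality treatment of quotients with $\deg M>g-1$) correctly supplies the details behind that citation.
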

\begin{proof}
If $F$ has a line subbundle $L$ with $h^0(L)\ge2$, then, as in the first part of the proof of \cite[Theorem 5.2]{cl}, we have $\gamma(F)\ge\Cl$. Otherwise, by Lemma \ref{lempr}, $d_F\ge d_{2s}$, giving $\gamma(F)\ge\frac{d_{2s}}2-s$.
\end{proof}
\begin{prop}\label{g11<}
Let $C$ be a curve of genus $11$ with $\Cl=5$ and $\Clt<\Cl$. Then $\Clt$ is computed by one or more generated  stable bundles $F$ of rank $2$ and degree $13$ with $h^0(F)=4$ and by no other bundles.
\end{prop}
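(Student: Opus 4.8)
The plan is to first pin down the exact value of $\Clt$ and then to force the shape of every bundle that computes it. The key elementary observation is that for a rank-$2$ bundle $F$ one has $\gamma(F)=\frac12\bigl(d_F-2(h^0(F)-2)\bigr)\in\frac12\mathbb Z$. Since $\Clt=\gamma(F)$ for some such $F$, $\Clt$ is a half-integer; combined with \eqref{gamma2'} and the hypothesis $\Clt<\Cl=5$, the only half-integer in $[\tfrac92,5)$ is $\tfrac92$, so $\Clt=\tfrac92$. Thus any bundle computing $\Clt$ has $\gamma=\tfrac92$, and at least one such bundle exists because $\Clt$ is a minimum; it therefore suffices to show that every computing bundle is a generated stable bundle of rank $2$, degree $13$ with $h^0=4$.

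Next I would take an arbitrary bundle $F$ computing $\Clt$: it is semistable of rank $2$ with $\mu(F)\le g-1=10$ and $h^0(F)=2+s$, $s\ge2$. From $\gamma(F)=\frac{d_F}2-s=\frac92$ we get $d_F=9+2s$, and the slope bound already forces $s\le5$. Since $\gamma(F)=\frac92<\Cl$, Lemma \ref{gen} gives $\frac92\ge\frac{d_{2s}}2-s$, i.e. $d_{2s}\le9+2s$. I would then eliminate $s=3,4,5$ by a short computation with the gonality sequence: for a genus-$11$ curve with $\Cl=5$ one has $d_1=7$ and $d_4=13$, and Serre duality together with \eqref{dr} forces $d_6=16$, $d_8=18$, $d_{10}=20$, so that $d_{2s}>9+2s$ for $s\ge3$ --- a contradiction. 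Hence $s=2$, giving $h^0(F)=4$ and $d_F=13$.

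For the structural conclusions I would argue as follows. Stability is automatic: $\mu(F)=\frac{13}2\notin\mathbb Z$, so no line subbundle can have slope equal to $\mu(F)$, and a semistable rank-$2$ bundle of non-integral slope is stable. For generatedness let $F_0\subseteq F$ be the subsheaf generated by $H^0(F)$, so that $h^0(F_0)=4$. If $\operatorname{rank}F_0=1$, its saturation $L$ is a line subbundle of $F$ with $h^0(L)\ge4$, whence $\deg L\ge d_1=7$, contradicting semistability ($\deg L\le6$). If $\operatorname{rank}F_0=2$ but $F_0\ne F$ then $\deg F_0\le12$; applying Lemma \ref{lempr} to the generated bundle $F_0$ (with $s=2$) yields either $d_{F_0}\ge d_4=13$, contradicting $\deg F_0\le12$, or a line subbundle of $F_0$ with at least two sections, which saturated in $F$ again contradicts semistability. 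Hence $F=F_0$ is generated, completing the proof.

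I expect the main obstacle to be the two places that genuinely use the gonality arithmetic of genus $11$: eliminating $s\ge3$ and running the generatedness argument. The delicate point in the latter is that Lemma \ref{lempr} must be applied to the generated subsheaf $F_0$ rather than to $F$ itself, and the sub-line-bundle it produces must then be transferred (by saturation) back into $F$ to contradict its semistability; keeping track of degrees under this transfer, and of the exact values $d_4=13$, $d_6=16,\dots$, is where the care is needed.
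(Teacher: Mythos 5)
Your proof is correct, and its skeleton coincides with the paper's: both pin down $\Clt=\frac92$ from \eqref{gamma2'} together with the half-integrality of $\gamma$ in rank $2$, both eliminate $h^0(F)\ge5$ via Lemma \ref{gen} and gonality arithmetic, and both deduce stability from the coprimality of $2$ and $13$. The differences lie in the two sub-arguments you flagged as delicate. For the elimination of $s\ge3$, the paper needs neither the slope bound $s\le5$ nor the values $d_8,d_{10}$: since the gonality sequence increases by at least $1$ at each step, $d_{2s}\ge d_6+2s-6$, hence $\frac{d_{2s}}{2}-s\ge\frac{d_6}{2}-3=5$ uniformly for all $s\ge3$, using only $d_6=16$ (which the paper gets from \eqref{dr} and a cited lemma whose content is exactly your Serre-duality computation). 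For generatedness the two routes genuinely diverge: you pass to the subsheaf generated by the sections, split into rank cases, and combine Lemma \ref{lempr} with $d_4=13$ and saturation inside $F$; the paper instead observes that if $F$ fails to be generated at some point, an elementary transformation yields a subsheaf $F'\subset F$ of degree $12$ with $h^0(F')=4$, which is again a semistable bundle (its line subbundles sit inside $F$, hence have degree $\le6=\mu(F')$), so that $\gamma(F')=4<\frac92=\Clt$ contradicts the definition of $\Clt$ as a minimum. The paper's argument is shorter and avoids both Paranjape--Ramanan and the value of $d_4$; yours is equally sound and has the mild merit of relying only on the semistability of $F$ itself rather than invoking the minimality of $\Clt$ a second time.
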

\begin{proof}
We have $\Clt=\frac92$ by \eqref{gamma2'}. Let $F$ be a bundle computing $\Clt$ with $h^0(F)=2+s$. If $s\ge3$, then, by Lemma \ref{gen}, 
$$ \gamma(F)\ge\min\left\{\Cl,\frac{d_{2s}}2-s\right\}\ge\min\left\{5,\frac{d_{6}+2s-6}2-s\right\}=\min\left\{5,\frac{d_{6}}2-3\right\}=5,$$
since $d_6=16$ by \eqref{dr} and \cite[Lemma 4.6]{cl}, a contradiction. So $s=2$, giving $h^0(F)=4$ and $d_F=13$. Since $F$ is semistable and $\gcd(2,13)=1$, it is in fact stable. If $F$ is not generated, then there exists a subsheaf $F'$ of degree $12$ with $h^0(F')=4$; moreover $F'$ is a semistable bundle. Then $\gamma(F')=4$, a contradiction.
\end{proof}
\begin{thm}\label{g=11}
Let $C$ be a Petri curve of genus $11$. Then 
\begin{itemize}
\item[(i)] $\Clr<\Cl$ whenever $r$ is divisible by $3$;
\item[(ii)] $\frac{11}3\le\Clth\le\frac{14}3$;
\item[(iii)] the expected dimension of the Brill-Noether locus $B(3,20,6)$ is $-5$, but this locus is non-empty.
\end{itemize}
\end{thm}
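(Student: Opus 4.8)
The plan is to deduce all three parts from the existence of a \emph{semistable} bundle $E$ of rank $3$ and degree $2d_2=20$ with $h^0(E)=6$; since $\gcd(3,20)=1$, such an $E$ is automatically stable. For a Petri curve of genus $11$ one has $d_1=7$, $d_2=10$, $d_4=13$, so $d_1$ computes $\Cl=5$ and $3d_1=21>20=2d_2$. By Proposition \ref{mukai} there exist dual spans $E_1,E_2$ of line bundles $L_1,L_2$ of degree $d_2$ with $h^0(L_i)=3$ and $h^0(E_2\otimes E_1)>9$, so by Lemma \ref{nontriv} there is a non-trivial extension $0\to E_1\to E\to L_2\to 0$ with $h^0(E)=6$, and by Lemma \ref{cliff} this bundle satisfies $\gamma(E)=\tfrac{2d_2}{3}-2=\tfrac{14}{3}<\Cl$. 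Everything therefore reduces to showing that, for a suitable choice of data, $E$ is semistable.

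First I would split according to the value of $\Clt$, which by \eqref{gamma2'} is either $5$ or $\tfrac92$. If $\Clt=\Cl=5$, all the hypotheses of Proposition \ref{ss} hold and $E$ is semistable at once. The delicate case is $\Clt=\tfrac92$: here $3d_4=39<40=4d_2$, so Corollary \ref{maincor} and Remark \ref{2<1} just fail to apply. Tracing the proof of Proposition \ref{ss} through Remark \ref{2<1}, $E$ can fail to be semistable only in one rigid configuration, namely when it has a line subbundle $L$ of degree $7$ (necessarily a $g^1_7$ with $h^0(L)=2$) for which $F':=E/L$ is stable of degree exactly $d_4=13$ and computes $\Clt$. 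By Proposition \ref{g11<} such an $F'$ is a generated stable bundle of rank $2$, degree $13$ and $h^0(F')=4$; moreover, since $L\not\subset E_1$ forces $E_1\cap L=0$ inside $E$, the inclusion $E_1\hookrightarrow E$ induces an injection $E_1\hookrightarrow F'$ with torsion cokernel of length $\deg F'-\deg E_1=3$.

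I expect this borderline case to be the main obstacle and the heart of the argument. The plan is to exclude the bad configuration for a \emph{general} extension class: the classes in $\ker\varphi\subset\operatorname{Ext}^1(L_2,E_1)$ giving $h^0(E)=6$ form a space of dimension $h^0(E_2\otimes E_1)-9>0$ by \eqref{long} and Lemma \ref{nontriv}, whereas the bundles $F'$ computing $\Clt$ form a bounded family by Proposition \ref{g11<}. Combined with the finitely many parameters for the auxiliary $g^1_7$ and the colength-$3$ embedding $E_1\hookrightarrow F'$, the extensions admitting such a destabilising $L$ should sweep out only a proper subvariety, so a general $E$ in the family is semistable. Controlling these dimensions precisely, equivalently showing that the rigid embedding $E_1\hookrightarrow F'$ of colength $3$ into an $F'$ computing $\Clt$ cannot be forced on a general member of the family, is the crux; an alternative would be to build a semistable $E$ directly as an extension $0\to F'\to E\to L\to 0$ and verify via \eqref{long} that the required sections lift, but either way the point where $\Clt=\tfrac92$ enters is exactly the hard step.

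Granting the semistable (hence stable) $E$, the three assertions follow quickly. For (i), $\gamma(E)=\tfrac{14}{3}<5$ gives $\Clth<\Cl$, and then $\Clr\le\Clth<\Cl$ for every $r$ divisible by $3$ by \cite[Lemma 2.2]{cl}. For (ii), the same $E$ yields the upper bound $\Clth\le\tfrac{14}{3}$, while the lower bound $\Clth\ge\tfrac{11}{3}$ follows from the rank-three estimates of \cite{cl1} exactly as the bound $\Clth\ge3$ was used in genus $9$. For (iii), a direct computation gives $\beta(3,20,6)=9g-8-6(6-20+3g-3)=-5$, while $E$, being stable with $h^0(E)\ge6$, exhibits a point of $B(3,20,6)$, which is therefore non-empty despite its negative expected dimension.
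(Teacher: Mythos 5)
Your reduction and your treatment of the case $\Clt=\Cl$ are fine, and you have correctly identified the single ``rigid configuration'' that can destabilise the Mukai-type extension when $\Clt=\frac92$: a line subbundle $L$ of degree $7$ with quotient $E/L$ a generated stable bundle of rank $2$ and degree $13=d_4$ with $h^0=4$. But your proposal for excluding it is a genuine gap, not just an omitted computation. You want to argue that a \emph{general} extension class in $\ker\varphi$ avoids this configuration, but the only lower bound available for $\dim\ker\varphi$ is $h^0(E_2\otimes E_1)-9\ge1$ (Mukai's parity argument gives $h^0(E_2\otimes E_1)\ge10$, nothing more), so the family of extensions with $h^0(E)=6$ may consist of a single class up to scalar. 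A generality/dimension-count argument has no room to operate in that case, and you give no other mechanism to rule out the destabilising $L$. The paper does something different in this case: it abandons the Mukai construction entirely and turns the obstruction into the construction. Starting from the bundle $F$ computing $\Clt$ (stable, rank $2$, degree $13$, $h^0(F)=4$, generated, by Proposition \ref{g11<}), it sets $L=\det F$, shows $h^0(L)=5$ via Lemma \ref{lempr}, and takes a \emph{non-split} extension $0\to F\to E\to K_C\otimes L^*\to0$. Non-split immediately forces $E$ stable (no semistability analysis needed), and the lifting of the two sections of $K_C\otimes L^*$ is proved by showing the multiplication map $H^0(K_C\otimes L^*)\otimes H^0(K_C\otimes F^*)\to H^0(K_C^2\otimes L^*\otimes F^*)$ is non-surjective: the source has dimension $22$, the target dimension $21$, and the base point free pencil trick identifies the kernel with $H^0(F)$ of dimension $4$, so the image has dimension $18<21$. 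This is essentially the ``alternative'' you mention in passing, but the specific choice of quotient $K_C\otimes(\det F)^*$ is exactly what makes the pencil trick computation work, and without it the lifting statement is unproved.

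There is also a secondary gap in (ii). You assert that $\Clth\ge\frac{11}3$ follows from \cite{cl1} ``exactly as'' $\Clth\ge3$ did in genus $9$. It does not: applying \cite[Theorem 4.1]{cl1} directly with $\Clt\ge\frac92$ gives only $\Clth\ge\frac{2\Clt+1}3\ge\frac{10}3$. To reach $\frac{11}3$ the paper has to sharpen the statement of that theorem, replacing $\frac{2\Clt+1}3$ by $\min\left\{\frac{2\Cl+1}3,\frac{2\Clt+2}3\right\}$ after checking its proof; this refinement is needed and is not automatic from the published statement.
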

\begin{proof} 
(i) If $\Clt=\Cl$, this follows from Theorem \ref{main}, Lemma \ref{n=3} and Proposition \ref{mukai} together with \cite[Lemma 2.2]{cl}.

If $\Clt<\Cl$, Theorem \ref{main} does not apply and, since $d_2=10$ and $d_4=13$, neither does Corollary \ref{maincor}. However, by Proposition \ref{g11<}, there exists a generated stable bundle $F$ of rank $2$ and degree $13$ with $h^0(F)=4$. Any line subbundle of $F$ has degree $\le6$ and hence $h^0\le1$. So, by Lemma \ref{lempr}, we have $h^0(\det F)\ge5$, and in fact this is an equality since $\deg\det F=13=d_4$.

Now let $L=\det F$ and consider extensions
\begin{equation}\label{ext}
0\to F\to E\to K_C\otimes L^*\to0.
\end{equation}
It is clear that $\deg E=20$ and that, if \eqref{ext} does not split, then $E$ is stable. Moreover, since $h^0(L)=5$, it follows by Riemann-Roch that $h^0(K_C\otimes L^*)=2$. Thus $h^0(E)\le6$ with equality if and only if all sections of $K_C\otimes L^*$ lift to $E$. For this, we require that the canonical homomorphism
$$H^1(K_C^*\otimes L\otimes F)\to \mbox{Hom}(H^0(K_C\otimes L^*),H^1(F))$$
should fail to be injective.  Equivalently, the dual homomorphism
\begin{equation}\label{final}
H^0(K_C\otimes L^*)\otimes H^0(K_C\otimes F^*)\to H^0(K_C^2\otimes L^*\otimes F^*)
\end{equation}
should be non-surjective. We already know that $h^0(K_C\otimes L^*)=2$ and, by Riemann-Roch,
$$h^0(K_C\otimes F^*)=h^1(F)=h^0(F)-13+20=11.$$
So the dimension of the LHS of \eqref{final} is equal to $22$. The bundle $K_C^2\otimes L^*\otimes F^*$ is a stable bundle of rank $2$ and degree $41=4g-3$, so, by Riemann-Roch, the RHS of \eqref{final} has dimension $2g-1=21$. Moreover, by the base point free pencil trick, the kernel of \eqref{final} is isomorphic to
$$H^0(K_C^*\otimes L\otimes K_C\otimes F^*)=H^0(\det F\otimes F^*)=H^0(F).$$
Since $h^0(F)=4$, this completes the proof that \eqref{final} is not surjective.

(ii) The stable bundles $E$ constructed in (i) have $\gamma(E)=\frac13(20-6)=\frac{14}3$. On the other hand, since $\Clt\ge\frac92$, we have $\Clth\ge\frac{10}3$ by \cite[Theorem 4.1]{cl1}. Checking the proof of \cite[Theorem 4.1]{cl1}, one sees easily that one can replace $\frac{2\Clt+1}3$ in the statement of the theorem by $\min\left\{\frac{2\Cl+1}3,\frac{2\Clt+2}3\right\}$. So, in  our case, $\Clth\ge\frac{11}3$.

(iii) We have $\beta(3,20,6)=9g-8-6(6-20+3g-3)=-5$. The bundle $E$ is semistable and therefore stable since $\gcd(3,20)=1$, so $B(3,20,6)\ne\emptyset$. 
\end{proof}

\section{Questions and comments}\label{ques}
In this section, we raise a number of interesting questions with some observations on possible answers.  
\begin{ques}\label{ques1}\begin{em}
Can one find further examples of semistable bundles $E$ of rank $3$ with $h^0(E)=6$ and $\gamma(E)<\Cl$? 
\end{em}\end{ques}
In attempting to answer this, we note first
\begin{prop}\label{comm1}
Let  $E$ be a semistable bundle of rank $3$ on $C$ with $h^0(E)=6$. Then
\begin{equation}\label{h0=6}
\gamma(E)\ge\min\left\{\frac{d_9}3-2, d_1-2,\frac{2d_2}3-2,\frac{d_4}2-2\right\}.
\end{equation}
\end{prop}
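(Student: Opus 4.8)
The plan is to bound $\gamma(E)$ by distinguishing cases according to how sections of $E$ can be ``concentrated'' in subbundles of smaller rank. The quantity on the right-hand side of \eqref{h0=6} has four terms, and I expect each to arise from a different structural possibility for $E$: the term $\frac{d_9}{3}-2$ should come from the generic case where $E$ has no proper subbundle $F$ with $h^0(F)>r_F$ (so that Lemma \ref{lempr} applies directly), while the three remaining terms $d_1-2$, $\frac{2d_2}{3}-2$ and $\frac{d_4}{2}-2$ should correspond to the three ways a subbundle can carry ``too many'' sections, namely a line subbundle with $h^0\ge2$, a rank-$2$ subbundle, or (after quotienting) some intermediate configuration.

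First I would dispose of the clean case: if $E$ possesses no proper subbundle $F$ with $h^0(F)>r_F$, then since $h^0(E)=6=3+3$ we have $s=3$, and Lemma \ref{lempr} gives $d_E\ge d_9$, whence $\gamma(E)=\frac13(d_E-6)\ge\frac{d_9}{3}-2$. This accounts for the first term. Otherwise, choose a proper subbundle $F$ with $h^0(F)>r_F$; I would take $F$ to be generated by sections (or pass to the subsheaf generated by its sections, noting it is a semistable subbundle of the same or smaller degree with at least as many sections, as in the argument of Proposition \ref{g11<}). The case $r_F=1$ gives a line subbundle $L$ with $h^0(L)\ge2$, hence $d_L\ge d_1$; semistability of $E$ forces $\mu(E)\ge\mu(L)=d_L\ge d_1$, so $d_E\ge 3d_1$ and $\gamma(E)\ge d_1-2$. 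The case $r_F=2$ with $h^0(F)\ge3$ should yield the term $\frac{2d_2}{3}-2$: here $F$ contributes to $\Clt$-type considerations, giving $d_F\ge 2d_2$ (via Lemma \ref{lempr} applied to $F$, since a rank-$2$ bundle with $h^0\ge3$ has $s\ge1$ and $d_F\ge d_2$, and one pushes to $h^0=4$ to get $d_F\ge d_{2}$... this needs care), and semistability transfers the slope bound to $E$.

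The remaining term $\frac{d_4}{2}-2$ is where I expect the main obstacle. It should emerge from a rank-$2$ subbundle $F$ with exactly $h^0(F)=4$ (so $s=2$), where Lemma \ref{lempr} applied inside $F$ gives $d_F\ge d_4$ rather than $2d_2$; the delicate point is arranging that the relevant subbundle has no line subbundle with too many sections, so that Lemma \ref{lempr} genuinely produces $d_4$ and not something weaker, and then checking that semistability of $E$ propagates this to $\gamma(E)\ge\frac{d_4}{2}-2$. The bookkeeping of exactly which subbundle realizes which term, and verifying that the minimum over all four cases is what survives, will require matching each value of $s=h^0(F)-r_F$ to the correct gonality index ($d_1$, $d_2$, $d_4$, or $d_9$) and confirming the slope inequalities from semistability are all in the correct direction. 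I would organize the write-up as a short case analysis on $(r_F, h^0(F))$ for the offending subbundle of largest $h^0-r_F$, reducing each case to an application of Lemma \ref{lempr} combined with the semistability inequality $\mu(F)\le\mu(E)$.
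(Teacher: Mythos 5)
Your cases for the $d_9$, $d_1$ and $d_4$ terms match the paper's proof: Lemma \ref{lempr} with $s=3$ gives $d_E\ge d_9$; a line subbundle with $h^0\ge 2$ has degree $\ge d_1$ and semistability gives $d_E\ge 3d_1$, hence $\gamma(E)\ge d_1-2$; and a rank-$2$ subbundle $F$ with $h^0(F)\ge4$ and no line subbundle with $h^0\ge2$ has $d_F\ge d_4$ by Lemma \ref{lempr} (with $s\ge2$), whence $\mu(F)\le\mu(E)$ gives $d_E\ge\frac{3d_4}2$ and $\gamma(E)\ge\frac{d_4}2-2$.

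However, the case of a rank-$2$ subbundle $F$ with $h^0(F)=3$ (and no line subbundle with $h^0\ge2$) contains a genuine gap, which you half-acknowledge (``this needs care''). Lemma \ref{lempr} applied to $F$, where $s=1$, gives only $d_F\ge d_2$, not $d_F\ge 2d_2$, and there is no way to ``push to $h^0=4$'' (if some rank-$2$ subbundle had $h^0\ge4$ you would simply be in the previous case). Moreover, the mechanism you propose --- transferring a bound on $d_F$ to $E$ via semistability --- is structurally incapable of producing the term $\frac{2d_2}3-2$: from $d_F\ge d_2$ and $\mu(F)\le\mu(E)$ one gets only $d_E\ge\frac{3d_2}2$, i.e.\ $\gamma(E)\ge\frac{d_2}2-2$, which is strictly weaker than the claimed bound. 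The missing idea, which is what the paper does, is to bound the quotient as well: since $h^0(F)=3$ exactly and $h^0(E)=6$, the quotient line bundle $E/F$ satisfies $h^0(E/F)\ge 3$, hence $d_{E/F}\ge d_2$ by definition of the gonality sequence; then additivity of degree gives $d_E=d_F+d_{E/F}\ge 2d_2$, so $\gamma(E)\ge\frac{2d_2}3-2$, with no appeal to semistability at all in this case. With that correction, your case analysis (no subbundle with $h^0(F)>r_F$; a line subbundle with $h^0\ge2$; a rank-$2$ subbundle with exactly $3$ sections; a rank-$2$ subbundle with at least $4$ sections) is exhaustive and yields the proposition.
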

\begin{proof}
If $E$ has no proper subbundle $F$ with $h^0(F)>r_F$, then $d_E\ge d_9$ by Lemma \ref{lempr}. So $\gamma_E\ge\frac13(d_9-6)=\frac{d_9}3-2$.

If $E$ has a line subbundle $L$ with $h^0(L)\ge2$, then $d_L\ge d_1$, so by semistability $d_E\ge3d_1$, giving $\gamma(E)\ge\frac13(3d_1-6)=d_1-2$.

If $E$ has a subbundle $F$ of rank $2$ with $h^0(F)=3$ and no line subbundle with $h^0\ge2$, then $d_F\ge d_2$. Moreover, $h^0(E/F)\ge3$, so $d_{E/F}\ge d_2$. Thus $d_E\ge2d_2$ and $\gamma(E)\ge\frac{2d_2}3-2$.

Finally, if $E$ has a subbundle $F$ of rank $2$ with $h^0(F)\ge4$ but no line subbundle with $h^0\ge2$, then, by Lemma \ref{lempr} again, we have $d_F\ge d_4$. So $d_E\ge\frac{3d_4}2$ and $\gamma(E)\ge\frac{d_4}2-2$.
\end{proof}

\noindent{\bf Comment 1.}
The value $d_1-2$ in \eqref{h0=6} can always be attained (by a direct sum of $3$ line bundles of degree $d_1$ with $h^0=2$), but it is not clear whether it can be attained by a stable bundle; in any case $d_1-2\ge\Cl$, so this is not interesting from the point of view of Question \ref{ques1}. In general, the construction of \eqref{e1l2} seems the most likely to yield examples and the main obstacle is that we need to prove that $h^0(E_2\otimes E_1)>9$.  For Petri curves of low genus, we have
\begin{itemize}
\item $g=7$: \eqref{h0=6} gives $\gamma(E)\ge\frac83$. This could be attained only by a bundle of the form \eqref{e1l2} and such bundles exist if and only if $h^0(E_2\otimes E_1)>9$. Apart from this, $\gamma(E)\ge\Cl$.
\item $g=8,10$: \eqref{h0=6} gives $\gamma(E)\ge d_1-2=\Cl$, so there is nothing to prove.
\item $g=9$: \eqref{h0=6} gives $\gamma(E)\ge\frac{10}3$ and this value can be attained by Theorem \ref{g=9}.
\item $g=11$: \eqref{h0=6} gives $\gamma(E)\ge\frac{13}3$. By Theorem \ref{g=11}, the value $\frac{14}3$ can be attained, but we do not know about $\frac{13}3$. Any bundle attaining this value would have to be of degree $19$ and to have no proper subbundle $F$ with $h^0(F)>r_F$. 
\item $g=12$: \eqref{h0=6} gives $\gamma(E)\ge\frac{14}3$. This value could be attained by a bundle of the form \eqref{e1l2} or by a bundle possessing no subbundle $F$ with $h^0(F)>r_F$; we do not know whether any such bundles exist.
\end{itemize}

\noindent{\bf Comment 2.}
If we restrict attention to stable bundles on Petri curves, Question \ref{ques1} can be rephrased in terms of Brill-Noether loci. In fact, on a Petri curve, we have (using \eqref{dr})
\begin{itemize}
\item $g$ odd: $\beta(3,d,6)<0$ if and only if $d\le 3d_1-1$;
\item $g$ even: $\beta(3,d,6)<0$ if and only if $d\le 3d_1+1$. In this case the Brill-Noether locus $B(3,3d_1+1,6)$ is non-empty. (Take 3 line bundles $L_1$, $L_2$, $L_3$ of degree $d_1$ with $h^0(L_i)=2$ and no two of the $L_i$ isomorphic, and take $E$ to be a general positive elementary transformation $0\to L_1\oplus L_2\oplus L_3\to E\to \tau\to0$ with $\tau$ of length $1$; then $E\in B(3,3d_1+1,6)$.)
\end{itemize}
We have also $\Cl=d_1-2$; so, for $E$ of rank $3$ with $h^0(E)=6$, the condition $d_E\le3d_1\pm1$ is equivalent to $\gamma(E)\le\Cl\pm\frac13$. We therefore have the following version of Question \ref{ques1}.
\begin{ques}\label{ques2}\begin{em} Do there exist non-empty Brill-Noether loci $B(3,d,6)$ on a Petri curve $C$ with negative expected dimension in addition to those listed in Comment 2 above and those of Theorems \ref{g=9} and \ref{g=11}?
\end{em}\end{ques}

\noindent{\bf Comment.} 
The case $d_E=3d_1$, where $\gamma(E)=\Cl$, is interesting here. There certainly exist semistable bundles in this case, but it is not clear whether stable bundles exist.

\begin{ques}\label{ques3}\begin{em}
Can one calculate $\Clth$ precisely or at least obtain a better estimate than that of \cite{cl1}?  
 \end{em}\end{ques}

\noindent{\bf Comment.}
For a Petri curve of genus $11$, we have slightly improved the estimate of \cite{cl1} in the proof of Theorem \ref{g=11}(ii) and this improvement applies to other curves. The arguments of \cite{cl1} suggest that one example to be considered for a low value of $\gamma(E)$ would be a bundle of degree $2g+3$ expressible in the form
$$0\to F\to E\to L\to0,$$
where $F$ is a semistable bundle of rank 2 and degree $d_2$ with $h^0(F)=3$, $L$ is a line bundle with the maximal number of sections possible for its degree and $h^0(E)=3+h^0(L)$. Possible examples are
\begin{itemize}
\item $C$ Petri of genus 7: $d_L=10=d_4$, so $h^0(E)=8$ and $\gamma(E)=\frac73$; if such a bundle exists, it computes $\Clth$;
\item $C$ Petri of genus 9: $d_L=13=d_5$, so $h^0(E)=9$ and $\gamma(E)=3$; again, if such a bundle exists, it computes $\Clth$;
\item $C$ Petri of genus 11: $d_L=15, h^0(L)=6$, so $h^0(E)=9$ and $\gamma(E)=\frac{13}3$; even if such a bundle exists, it may not compute $\Clth$.
\end{itemize} 

\begin{ques}\begin{em}
On a Petri curve of genus $11$, does there exist a stable bundle of rank $2$ and degree $13$ with $h^0=4$? 
\end{em}\end{ques}

\noindent{\bf Comment.}
As we have seen in Proposition \ref{g11<}, this is the only way in which one could have $\Clt<\Cl$ on such a curve. By \cite[Theorem 3.2 and Remark 5.4]{gmn}, such a bundle exists if and only if there is a non-degenerate morphism $C\to{\mathbb P}^4$ of degree $13$ whose image is contained in a quadric.
\begin{ques}\begin{em}
What about $n=4$? 
\end{em}\end{ques}

\noindent{\bf Comment.}
There are now two obstacles to using the method of Section \ref{mainsec}; Propositions \ref{ss} and \ref{mukai} both use  $n=3$. It may be possible to generalise the first of these propositions; the second looks more problematic. Another possible method is to use extensions $0\to E\to G\to M\to0$ with $E$ of rank $3$ and degree $2d_2$ with $h^0(E)=6$ and $M$ a line bundle of degree $d_1$ with $h^0(M)=2$; one still needs to prove semistability and show that the multiplication map
$$H^0(M)\otimes H^0(K_C\otimes E^*)\to H^0(M\otimes K_C\otimes E^*)$$
 is not surjective.
\begin{ques}\begin{em}
Can one find examples with $\Clth>\Clt$ (or more generally $\Clnn>\Cln$)? 
\end{em}\end{ques}

\noindent{\bf Comment.}
Any example would show that the hypotheses of \cite[Theorem 2.4]{ln} can fail. Genus $11$ is the first case where this might happen, but note that the bundles $E$ constructed in the proof of Theorem \ref{g=11} are all generated, so the conclusion of \cite[Theorem 2.4]{ln} could still hold.

\section{Postscript}\label{ps}

Since this paper was completed in September 2010, there have been remarkable developments in the construction of bundles providing counter-examples to Mercat's conjecture and relating them to Koszul cohomology, the maximal rank conjecture and the geometry of the moduli space of curves \cite{fo,ln2,ln3,fo2}. It is interesting (and probably significant) to note that all currently known counter-examples involve curves lying on K3 surfaces. Most of this work concerns bundles of rank $2$ and in particular \cite[Theorem 1.3]{fo2} gives a negative answer to Question 5.5 for a general curve of genus $11$. The paper \cite{fo2} also contains a significant result for bundles of rank $3$, showing that $\Clth<\Cl$ for curves of genus $g\ge11$ of maximal Clifford index which lie on K3 surfaces, thus extending the results of this paper and providing an answer to Question 5.1 \cite[Corollary 1.6]{fo2}.

\end{document}